\newtheorem{theorem}{Theorem}[section]
\newtheorem{lemma}[theorem]{Lemma}
\newtheorem{corollary}[theorem]{Corollary}
\newtheorem{proposition}[theorem]{Proposition}
\newtheorem{remark}[theorem]{Remark}
\def\RR{{\mathbb R}}  
\def\PP{{\mathbb P}}
\def\NN{{\mathbb N}}      
\newcommand{\norm}[1]{\left\Vert #1 \right\Vert} 
\DeclareMathOperator{\diam}{diam}
\DeclareMathOperator{\fix}{Fix}
\DeclareMathOperator{\arcsinh}{arcsinh}
\begin{document}
 \title[Inexact Krasnosel'skii-Mann iterations]{Rates of convergence for inexact Krasnosel'skii-Mann iterations in Banach spaces\thanks{N\'ucleo Milenio Informaci\'on y Coordinaci\'on en Redes ICM/FIC RC130003}}


\author[M. Bravo]{Mario Bravo}
\address[Mario Bravo]{Departamento de Matem\'atica y Ciencia de la Computaci\'on, Universidad de Santiago de Chile, 
Alameda Libertador Bernardo O'higgins 3363, Santiago, Chile.
{\em E-mail}: {\tt mario.bravo.g@usach.cl}}
\thanks{{\em Acknowledgements.} This work was partially supported by N\'ucleo Milenio Informaci\'on y Coordinaci\'on en Redes ICM/FIC RC130003. 
Mario Bravo was partially funded by FONDECYT 11151003. Roberto Cominetti and Mat\'ias Pavez-Sign\'e gratefully acknowledge the support provided by FONDECYT 1130564 and FONDECYT 1171501.}

\author[R. Cominetti]{Roberto Cominetti}
\address[Roberto Cominetti]{Facultad de Ingenier\'ia y Ciencias, Universidad Adolfo Ib\'a\~nez, 
Diagonal Las Torres 2640, Santiago, Chile. 
{\em E-mail}: {\tt roberto.cominetti@uai.cl}}

\author[M. Pavez-Sign\'e]{Mat\'ias Pavez-Sign\'e}
\address[Mat\'ias Pavez-Sign\'e]{Departamento de Ingenier\'ia Matem\'atica, Universidad de Chile,  Beauchef 851, Santiago, Chile.
{\em E-mail}: {\tt mpavez@dim.uchile.cl}}


 \subjclass[2010]{Primary: 47H09, 47H10; Secondary: 65J08, 65K15, 60J10}
 \keywords{Nonexpansive maps, fixed point iterations, rates of convergence, evolution equations}

\date{}
\begin{abstract}
We study the convergence of an inexact version of the classical 
Krasnosel'skii-Mann iteration for computing fixed points of nonexpansive maps. Our main result establishes a new metric bound for the fixed-point 
residuals, from which we derive their rate of convergence as well as the convergence 
of the iterates towards a fixed point. The results are applied 
to three variants of the basic iteration: infeasible iterations 
with approximate projections, the Ishikawa iteration, and diagonal
Krasnosels'kii-Mann schemes. The results are also extended to continuous time in order 
to study the asymptotics
of nonautonomous evolution equations governed by nonexpansive operators.
\end{abstract}

\maketitle

\maketitle



\section{Introduction}
Let   $T:C\to C$ be a nonexpansive map defined on a closed convex domain $C$ in a Banach space  $(X,\|\cdot\|)$. 
The Krasnosel'skii-Mann iteration approximates a fixed point of $T$ by the sequential averaging process  
\begin{equation}\label{KM} \tag{\sc km}
  x_{n+1}= (1\!-\!\alpha_{n+1})\,x_n + \alpha_{n+1}\,Tx_n,
\end{equation}
where  $x_0\in C$ is an initial guess and $\alpha_n\in [0,1]$ is a given sequence of scalars.

This iteration, introduced by Krasnosel'skii \cite{kra} and Mann \cite{man},
 arises frequently in convex optimization as many algorithms can be cast 
in this framework. This is the case of the gradient  method for 
convex functions with Lipschitz gradient \cite{btp},  the proximal point method \cite{mar,rock},
as well as different decomposition methods such as the forward-backward 
splitting method  \cite{mer,pas}, the alternating direction method of multipliers ADMM \cite{gab}, the Douglas-Rachford splitting \cite{dr},
and the Peaceman--Rachford splitting \cite{pr}. For a  comprehensive survey
of these methods and their numerous applications we refer to \cite{bc}.
Note that \eqref{KM} also arises when discretizing the evolution equation 
$u'(t)+[I - T]u=0$ (see {\em e.g.} \cite{bb92,bre}), so that many results for \eqref{KM}
admit natural extensions to continuous time.

A central issue when studying the convergence of the iterates $x_n$ towards a fixed point of $T$ 
is to establish the strong convergence of the residuals $\|x_n-Tx_n\|\to 0$, a
property known as {\em asymptotic regularity} \cite{brp2,brp1}.  For an historical account
of results in this area we refer to  \cite{bb92,bb96}. In the case of a bounded domain  $C$, an explicit estimate
for the residual was conjectured in \cite{bb92} and  recently confirmed in \cite{csv14}, namely
\begin{equation}\label{bound}\|x_n-Tx_n\|\leq \dfrac{\diam(C)}{\sqrt{\pi\sum_{k=1}^n\alpha_k(1\!-\!\alpha_k)}},
\end{equation}
where the constant $1/\sqrt{\pi}$ is known to be tight (see \cite{bc16}).
This inequality implies that asymptotic regularity holds as soon as $\sum_{k=1}^\infty\alpha_k(1\!-\!\alpha_k)=\infty$.
The bound \eqref{bound} can also be used to estimate the number of iterations required to attain any prescribed accuracy, as well as to establish
the rate of convergence of the residuals. For instance, if $\alpha_n$ remains away from 0 and 1 the bound yields 
$\|x_n-Tx_n\|= O(1/\sqrt{n})$, whereas for $\alpha_n=1/n$ one gets an order $O(1/\sqrt{\ln n})$.

When the operator values $Tx$ can only be computed up to some precision, one is naturally led to consider the inexact iteration
 \begin{equation}\label{IKM} \tag{\sc ikm}
  x_{n+1}= (1\!-\!\alpha_{n+1})\,x_n + \alpha_{n+1}\,( Tx_n + e_{n+1}),
\end{equation}
where $e_{n+1}$ can be interpreted as an error in the evaluation of $Tx_n$, or as a perturbation
of the iteration. Note that \eqref{IKM} requires $x_n\in C$ so that it  
 assumes implicitly that the iterates remain in $C$.

This inexact iteration was used by Liu~\cite{Liu95} to study the equation $Sx=f$, restated as a fixed point  of 
$Tx=f+x-Sx$, with $S$ demicontinuous and strongly accretive on a uniformly smooth space.
Liu proved the strong convergence of the iterates
assuming that $Tx_n$ remains bounded, $\sum_{k\ge 1}\|e_k\|<\infty$, and $\alpha_n\to 0$ with $\sum_{k\ge 1}\alpha_k=\infty$. 
Weak convergence of \eqref{IKM}  was also established for $T$ nonexpansive, first on Hilbert spaces \cite[Combettes]{Combettes01} and then on
uniformly convex spaces \cite[Kim and Xu]{KIMXU07},
provided that $\fix(T)\not=\emptyset$ and $\sum_{k\ge 1}\alpha_k(1\!-\!\alpha_k)=\infty$ with $\sum_{k\ge 1}\alpha_k\|e_n\|<\infty$.
The rate of convergence of \eqref{IKM} was recently studied in a Hilbert setting by Liang, Fadili and Peyr\'e~\cite{lfp16}, proving
that $\|x_n-Tx_n\|=O(1/\sqrt{n})$ under the
stronger summability condition $\sum_{k\ge 1}k\|e_k\|<\infty$ and with $\alpha_n$  bounded away from 0 and 1. The proof exploits the Hilbert structure
and does not seem to carry over to general Banach spaces.

\subsection{Our contribution}
The main result in this paper is an extension of the bound \eqref{bound} which holds for the inexact iteration \eqref{IKM}
in general normed spaces. From this extended bound we draw a number of consequences on the convergence of the iterates and the rate 
of convergence of the fixed point residuals, and we derive continuous time analogs for the asymptotics of evolution equations governed by nonexpansive 
operators. 

In all what follows we denote $\epsilon_n\geq \|e_n\|$ a bound for the errors, and we let 
\begin{equation}
\tau_n=\mbox{$ \sum_{k=1}^n\alpha_k(1\!-\!\alpha_k).$}
\end{equation} 
We also consider the  function 
$\sigma:[0,\infty)\to\RR$ defined by 
\begin{equation}\label{omega}
\sigma(y)=\min\{1,{1}/{\sqrt{\pi y}}\}.
\end{equation} 
With these notations, our main result can be stated as follows.

\begin{theorem}\label{thm:main} 
Let $(x_n)_{n\in\NN}$ be a sequence generated by \eqref{IKM} and assume that 
\begin{equation}\label{h0}\tag{$\mbox{\sc h}_0$}
\mbox{there exists $\kappa\in [0,\infty)$ such that $x_n\in C$ and $\|Tx_n-x_0\|\le \kappa$ for all $n\in \NN$.}
\end{equation}
Then, for all $n\in\NN$ we have
\begin{equation}\label{bound_inexact}
\|x_n-Tx_n\| \leq {\kappa}\,{\sigma(\tau_n)} + \sum_{i=1}^{n}{2\,\alpha_i\epsilon_i}\,{\sigma(\tau_n\!-\!\tau_i)} + 2\,\epsilon_{n+1}.
	\end{equation}
In particular, if $\tau_n\to\infty$ and $\epsilon_n\to 0$ with $\sum_{k\geq 1}\alpha_k\epsilon_k<\infty$, 
then $\|x_n-Tx_n\|\to 0$.
	\end{theorem}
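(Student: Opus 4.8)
The plan is to reduce \eqref{bound_inexact} to a sharp ``error-free'' estimate for the homogeneous part of the dynamics and then to superpose the errors linearly, exploiting that each $e_i$ enters \eqref{IKM} affinely. Throughout I write $u_n=Tx_n-x_n$ for the signed residual, so $\|x_n-Tx_n\|=\|u_n\|$ is the quantity to be bounded. From \eqref{IKM} one reads off the increment $x_{n+1}-x_n=\alpha_{n+1}(u_n+e_{n+1})$ and the residual recursion
\begin{equation*}
u_{n+1}=(1-\alpha_{n+1})\,u_n+(Tx_{n+1}-Tx_n)-\alpha_{n+1}e_{n+1}.
\end{equation*}
Since $T$ is nonexpansive, $\|Tx_{n+1}-Tx_n\|\le\|x_{n+1}-x_n\|\le\alpha_{n+1}(\|u_n\|+\epsilon_{n+1})$, which immediately gives the almost-monotonicity $\|u_{n+1}\|\le\|u_n\|+2\alpha_{n+1}\epsilon_{n+1}$. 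This trivial bound exhibits no decay, so the real content is to extract the sharp long-range decay encoded in $\sigma$.

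The core step is a two-index estimate generalizing \eqref{bound}: for $0\le m\le n$ I would expand $x_n-Tx_m$ by unrolling the recursion for $y_k=x_k-Tx_m$, obtaining a representation of the form $y_n=\big(\prod_{j=m+1}^n(1-\alpha_j)\big)y_m+\sum_{k=m+1}^{n-1}\alpha_{k+1}\big(\prod_{j=k+2}^n(1-\alpha_j)\big)(Tx_k-Tx_m)+(\text{error terms})$. The weights appearing here are exactly the point probabilities of a sum of independent Bernoulli trials with success parameters $(\alpha_j)$, whose variance accumulated from step $m$ to $n$ equals $\tau_n-\tau_m$. Controlling the telescoping of the cross terms $Tx_k-Tx_m$ through the nonexpansiveness of $T$ and invoking the sharp uniform bound on the total variation of such Bernoulli convolutions — the mechanism responsible for the constant $1/\sqrt\pi$ and hence for the kernel $\sigma(\tau_n-\tau_m)$ — I would reach a homogeneous bound $\|u_n\|\le\|Tx_m-x_m\|\,\sigma(\tau_n-\tau_m)+\cdots$. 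The decisive observation is that each error $e_i$ enters this representation additively and is therefore propagated by the \emph{same} Bernoulli kernel from step $i$ to step $n$, contributing a term of size $2\alpha_i\epsilon_i\,\sigma(\tau_n-\tau_i)$; this is precisely the convolution sum in \eqref{bound_inexact}.

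It then remains to assemble the pieces. Taking the base index $m=0$ and using \eqref{h0} to replace the homogeneous scale $\|Tx_0-x_0\|$ by $\kappa$ via $\|Tx_n-x_0\|\le\kappa$ produces the leading term $\kappa\,\sigma(\tau_n)$; the accumulated errors give $\sum_{i=1}^n 2\alpha_i\epsilon_i\,\sigma(\tau_n-\tau_i)$; and the final increment contributes the boundary term $2\epsilon_{n+1}$, traced to the $-\alpha_{n+1}e_{n+1}$ and $Tx_{n+1}-Tx_n$ corrections in the residual recursion above. This establishes \eqref{bound_inexact}. The ``in particular'' clause follows at once: when $\tau_n\to\infty$ we have $\sigma(\tau_n)\to0$, while $\sum_{k\ge1}\alpha_k\epsilon_k<\infty$ together with $\epsilon_n\to0$ forces the convolution term to vanish, by splitting the sum at a fixed cutoff and using $\sigma\le1$ on the recent indices and $\sigma(\tau_n-\tau_i)\to0$ on the early ones.

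The hard part will be the core two-index estimate. Obtaining the \emph{sharp} constant $1/\sqrt\pi$ requires the tight total-variation bound for sums of Bernoulli trials rather than a crude Gaussian surrogate, and one must verify that the cross terms $Tx_k-Tx_m$ telescope correctly under nonexpansiveness so that the errors genuinely superpose linearly even though $T$ is nonlinear. Keeping exact track of the weights $2\alpha_i\epsilon_i$ and of the boundary term $2\epsilon_{n+1}$ through this comparison is the delicate bookkeeping I expect to be the main obstacle.
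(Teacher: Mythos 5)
Your preliminary reductions are correct: the identity $x_{n+1}-x_n=\alpha_{n+1}(u_n+e_{n+1})$, the residual recursion, and the derivation of the ``in particular'' clause from \eqref{bound_inexact} (splitting the convolution sum at a fixed cutoff, $\sigma\le 1$ on recent indices, $\sigma(\tau_n-\tau_i)\to 0$ on early ones) all match what is needed, and your overall strategy---unroll the recursion, recognize the weights $\alpha_i\prod_k(1-\alpha_k)$ as Bernoulli point probabilities, and extract the kernel $\sigma$ from a sharp random-walk estimate---is the same family of argument as the paper's. But there is a genuine gap exactly where you flag ``the hard part'': the core two-index estimate is never proved, and the ``decisive observation'' that each error $e_i$ ``is propagated by the same Bernoulli kernel,'' contributing $2\alpha_i\epsilon_i\,\sigma(\tau_n-\tau_i)$, is asserted rather than derived. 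This superposition principle is not available for free: $T$ is only nonexpansive, not affine, so the inexact orbit is \emph{not} the exact orbit plus a linear response to the errors. In the paper this is precisely the content of Corollary \ref{cor:2} and Proposition \ref{lemma1}: one first proves by induction, using nonexpansivity, the majorization $\|x_m-x_n\|\le w_{m,n}$, where $w_{m,n}$ solves the genuinely two-dimensional recursion \eqref{W} in which the errors $\epsilon_i+\epsilon_j$ are re-injected at \emph{every} level; one then bounds $w_{n,n+1}$ by interpreting it as the expected total reward of a Markov chain on $\mathbb{Z}^2$ (the fox-and-hare race) and estimating the probability of visiting each reward site via the random-walk bound of \cite{csv14}, namely $\PP\left(\mbox{$\sum_{k=j}^{n}Z_k\ge 0$ for all $j$}\right)\le\sigma(\tau_n-\tau_i)$ with $Z_k=F_k-H_k$. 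The coefficient $2\alpha_i\epsilon_i\,\sigma(\tau_n-\tau_i)$ arises from summing two such visit-probability bounds (one for the hare's reward, one for the fox's), not from a one-dimensional convolution identity; without this, or an equivalent analysis of the coupled recursion, your middle term is unsupported.

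A second, related defect: you invoke \eqref{h0} only through $\|Tx_0-x_0\|\le\kappa$, writing the homogeneous bound as $\|u_n\|\le\|Tx_m-x_m\|\,\sigma(\tau_n-\tau_m)+\cdots$ and then setting $m=0$. That is not how the hypothesis enters, and the bound as you state it is stronger than what the method delivers. The condition $\|x_0-Tx_{j-1}\|\le\kappa$ is needed for \emph{every} $j$, because in the double-sum representation of $x_m-x_n$ the terms with $i=0$ pair $x_0$ against $Tx_{j-1}$ for all $j\le n$; in the Markov picture, $\kappa$ is the reward collected whenever the hare reaches the burrow, an event that can occur at any stage, which is why the boundary condition $w_{-1,n}=\kappa$ appears at all levels of the recursion. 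Replacing $\kappa$ by the single quantity $\|Tx_0-x_0\|$ would yield a sharper theorem than the paper's, and nothing in your sketch justifies it.
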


Clearly, in the exact case with $\epsilon_n\equiv 0$ the bound \eqref{bound_inexact} yields \eqref{bound}.
The proof of Theorem \ref{thm:main} is presented in Section \S\ref{sec:proof} and uses probabilistic arguments 
by reducing the analysis to the study of an associated Markov reward process in $\mathbb{Z}^2$. 
As a first consequence of this result, in \S\ref{sec:cvgce} we explain how the property $\|x_n-Tx_n\|\to 0$ can be used to
show that $\fix(T)\neq\emptyset$ as well as the convergence of the iterates $x_n$ towards a fixed point.

The assumption $(\mbox{\sc h}_0)$ above imposes two conditions: the iterates must remain in $C$ and the images $Tx_n$
are bounded. Some situations in which these conditions hold are discussed in \S\ref{sec:h0},
including the case when $T$ is defined on the whole space and either it has a bounded range, or 
$\sum_{k\geq 1}\alpha_k\epsilon_k<\infty$ and $\fix(T)\neq\emptyset$.
Note also that when $C$ is bounded one can take $\kappa=\diam(C)$ so that it suffices to ensure that
the iterates remain in $C$. Alternatively, in \S\ref{sec:bddom} we consider an iteration that uses approximate 
projections to deal with the case when $x_n$ falls outside $C$.

Section \S\ref{sec:rates} exploits the bound \eqref{bound_inexact} in order to establish several results 
on the rate of convergence of the residuals. Theorem \ref{thm:2} 
shows that if $\alpha_n$ remains away from 0 and 1 and $\sum_{k\geq 1}k^a \|e_k\|<\infty$ then $\|x_n-Tx_n\|= O(1/n^{b})$ with 
$b=\min\{\frac{1}{2},a\}$. This extends the main result in \cite{lfp16} that covers
only the case $a=1$ and is restricted to Hilbert spaces. On the other hand, from  known properties of the Gauss hypergeometric function $_2F_1(a,b;c;z)$,
 we obtain as a Corollary of Theorem \ref{thm:dos}   that when $\norm{e_n}= O({1}/{n^a})$ with $\alpha_n$ bounded away from 0 and 1,
the residual norm satisfies
$$\norm{x_n - Tx_n}=\left\{
\begin{array}{ll}
O(1/n^{a-1/2})&\mbox{ if $\frac{1}{2}\leq a<1$,}\\
O({\log n}/{\sqrt{n}})&\mbox{ if $a=1$, }\\
O(1/{\sqrt{n}})&\mbox{ if $a>1$.}
\end{array}\right.$$
Note that for $a\leq 1$ the assumption $\norm{e_n}= O({1}/{n^a})$ is very mild and allows 
 for nonsummable errors. 
Similar rates are obtained for vanishing stepsizes of the form $\alpha_n=1/n^c$ with $c\le 1$.

Section \S\ref{sec:4} explores three variants of the basic iteration \eqref{IKM}. In \S\ref{sec:bddom} we deal with the case in
which the iterates might fall outside $C$ by  using a suitable approximate projection 
of $x_n$ onto $C$. Then, in \S\ref{sec:ishikawa} we analyze the Ishikawa iteration
which can be seen as a special case of the inexact scheme \eqref{IKM}.
In \S\ref{sec:diagonal} we consider a diagonal version of \eqref{IKM} in which
the operator $T$ might change at each iteration. 

The final Section \S\ref{sec:continuous} presents the extension of the results to continuous time, 
establishing the rate of convergence for the nonautonomous evolution equation
\begin{equation*}\tag{E}\begin{cases}
u'(t)+(I-T)u(t)=f(t),&\\
u(0)=x_0.&
\end{cases}
\end{equation*}

 \section{Proof of Theorem \ref{thm:main}}\label{sec:proof}
 We begin by noting that  $x_n-Tx_n = (x_n-x_{n+1})/\alpha_{n+1}+e_{n+1}$ so that
 \begin{equation}\label{eq:basic}\|x_n-Tx_n\|\leq  \frac{\|x_n-x_{n+1}\|}{\alpha_{n+1}}+\|e_{n+1}\|.
 \end{equation}
 In order to bound the term $\|x_n-x_{n+1}\|$ we follow a similar approach as in \cite{bb96,csv14} by establishing a recursive bound for the differences 
$\|x_m-x_n\|$ for all $0\leq m \leq n$. In what follows we let $\alpha_0=1$ and 
for $0\leq i\leq n$ we denote
\begin{equation}\label{eq:pi}
\pi_i^n=\alpha_i\,\mbox{$\prod_{k=i+1}^n(1\!-\!\alpha_k)$}
\end{equation}
so that $\sum_{i=0}^n\pi_i^n=1$. For $i=n$ we use the standard convention  $\prod_{k=n+1}^n(1\!-\!\alpha_k)=1$.
The following is a slight variant of \cite[Proposition 2]{csv14}, which itself extends a similar
result by Baillon and Bruck \cite{bb92,bb96}.
\begin{lemma}\label{sym} Let $(x_n)_{n\in\NN}$ be defined inductively by 
$x_{n+1}=(1-\alpha_{n+1})x_{n}+\alpha_{n+1}y_{n}$ with $x_0\in X$ and $y_n\in X$.
Then, setting  $y_{-1}=x_0$,  we have  $x_n=\sum_{i=0}^n\pi_{i}^ny_{i-1}$ for all $n\ge 0$. Moreover, for $0\le m\le n$ it holds 
	\begin{equation}\label{eq:uno}
	x_m-x_n=\sum_{i=0}^m\sum_{j=m+1}^n\pi_i^m\pi_j^n(y_{i-1}-y_{j-1}).
	\end{equation} 
\end{lemma}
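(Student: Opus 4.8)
The plan is to establish the two assertions in sequence, both by elementary computation driven by the normalization $\sum_{i=0}^n\pi_i^n=1$ and by the telescoping behaviour of the weights $\pi_i^n$ across different levels. First I would prove the representation $x_n=\sum_{i=0}^n\pi_i^n y_{i-1}$ by induction on $n$. The base case $n=0$ is immediate, since $\pi_0^0=\alpha_0=1$ and $y_{-1}=x_0$. For the inductive step the key observation is a one-step recursion among the weights themselves: directly from \eqref{eq:pi} one reads off $\pi_i^{n+1}=(1-\alpha_{n+1})\pi_i^n$ for every $0\le i\le n$, together with $\pi_{n+1}^{n+1}=\alpha_{n+1}$ (using the empty-product convention). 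Plugging the inductive hypothesis into the defining recursion $x_{n+1}=(1-\alpha_{n+1})x_n+\alpha_{n+1}y_n$ and matching terms then reproduces the formula at level $n+1$.

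For the difference formula \eqref{eq:uno} I would start from the representation just established and exploit the multiplicative factorization $\pi_i^n=\pi_i^m\,P$ valid for all $i\le m$, where I abbreviate $P=\prod_{k=m+1}^n(1-\alpha_k)$. Splitting the sum that represents $x_n$ at the index $m$ and using this factorization on the low part gives $\sum_{i=0}^m\pi_i^n y_{i-1}=P\,x_m$, hence $x_n=P\,x_m+\sum_{j=m+1}^n\pi_j^n y_{j-1}$. The same factorization, summed over $i\le m$ and combined with $\sum_{i=0}^m\pi_i^m=1$, yields $\sum_{i=0}^m\pi_i^n=P$; since $\sum_{j=0}^n\pi_j^n=1$ this forces $\sum_{j=m+1}^n\pi_j^n=1-P$.

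With these identities in hand I would expand the double sum on the right-hand side of \eqref{eq:uno}, separating the contributions of $y_{i-1}$ and $y_{j-1}$. Because the summands are products, the inner and outer summations factor, and the expression collapses to $(\sum_{j=m+1}^n\pi_j^n)\,x_m-(\sum_{i=0}^m\pi_i^m)\sum_{j=m+1}^n\pi_j^n y_{j-1}=(1-P)\,x_m-\sum_{j=m+1}^n\pi_j^n y_{j-1}$. Substituting $\sum_{j=m+1}^n\pi_j^n y_{j-1}=x_n-P\,x_m$ from the previous paragraph, the two copies of $P\,x_m$ cancel and the right-hand side reduces exactly to $x_m-x_n$, which is the claim.

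I do not expect a genuine obstacle here: the whole argument is routine algebra, and the only points requiring a little care are the consistent treatment of the shifted index (the appearance of $y_{i-1}$ rather than $y_i$) and the boundary conventions $\alpha_0=1$, $y_{-1}=x_0$, and empty product equal to $1$, which together make the $i=0$ term behave correctly and are what ultimately produce the clean cancellation.
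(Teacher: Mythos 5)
Your proposal is correct and follows essentially the same route as the paper: a straightforward induction for the representation $x_n=\sum_{i=0}^n\pi_i^n y_{i-1}$, and then the normalization $\sum_{i=0}^n\pi_i^n=1$ together with the factorization $\pi_i^n=\pi_i^m\prod_{k=m+1}^n(1-\alpha_k)$ (equivalently, the paper's identity $\pi_i^m-\pi_i^n=\sum_{j=m+1}^n\pi_i^m\pi_j^n$) to collapse the double sum. The only cosmetic difference is that you expand the right-hand side of \eqref{eq:uno} down to $x_m-x_n$ whereas the paper reads the same algebra in the opposite direction.
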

\begin{proof}
	The equality  $x_n=\sum_{i=0}^n\pi_{i}^ny_{i-1}$ follows by a straightforward inductive argument, while \eqref{eq:uno}
	follows from this equality  and the identities $\sum_{i=0}^n\pi_i^n=1$ and  $\pi_i^m-\pi^n_i=\sum_{j=m+1}^n\pi_i^m\pi_j^n$ for $0\le i\le m\le n$.
\end{proof}
We note that the sequence generated by \eqref{IKM} corresponds to $y_n=Tx_n+e_{n+1}$,
from which  we deduce the following recursive bound. 
\begin{corollary}\label{cor:2} Let $(x_n)_{n\in\NN}$ be generated by \eqref{IKM}. Assume $(\mbox{\sc h}_0)$ and let $\epsilon_n\geq\|e_n\|$. 
For each $n\in\NN$ define inductively $w_{m,n}$ for  $-1\le m \leq n$ by setting  $w_{-1,n}=\kappa$  
and
\begin{equation}\label{W}
w_{m,n}=\sum_{i=0}^m \sum_{j=m+1}^n \pi_i^m\pi_j^n(w_{i-1,j-1} + \epsilon_i + \epsilon_j)\quad \mbox{ for }m=0,\ldots,n. 
\end{equation}
 Then $\|x_m-x_n\|\le w_{m,n}$ for all $0\le m\le n$.
	\end{corollary}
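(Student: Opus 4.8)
The plan is to prove the bound $\|x_m-x_n\|\le w_{m,n}$ by induction, taking as starting point the explicit representation of the differences supplied by Lemma \ref{sym}. Since the sequence generated by \eqref{IKM} corresponds to $y_n=Tx_n+e_{n+1}$ (with $y_{-1}=x_0$), identity \eqref{eq:uno} reads
\[
x_m-x_n=\sum_{i=0}^m\sum_{j=m+1}^n\pi_i^m\pi_j^n(y_{i-1}-y_{j-1}),
\]
and because the weights $\pi_i^m\pi_j^n$ are nonnegative, the triangle inequality gives $\|x_m-x_n\|\le\sum_{i=0}^m\sum_{j=m+1}^n\pi_i^m\pi_j^n\|y_{i-1}-y_{j-1}\|$. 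Comparing this against the defining recursion \eqref{W}, it suffices to dominate each factor $\|y_{i-1}-y_{j-1}\|$ by the corresponding quantity $w_{i-1,j-1}+\epsilon_i+\epsilon_j$.

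I would run the induction on the index-sum $m+n$ (equivalently, strong induction over the pairs $(m,n)$ ordered by their sum), the diagonal pairs $m=n$ being immediate since the inner sum in \eqref{W} is empty, so $w_{m,m}=0=\|x_m-x_m\|$. For the inductive step, fix $0\le m<n$ and consider a generic term with $0\le i\le m<j\le n$. When $i\ge 1$, both $y_{i-1}=Tx_{i-1}+e_i$ and $y_{j-1}=Tx_{j-1}+e_j$ are genuine iteration values, so using the nonexpansiveness of $T$ together with $\|e_i\|\le\epsilon_i$ and $\|e_j\|\le\epsilon_j$ I would bound
\[
\|y_{i-1}-y_{j-1}\|\le\|Tx_{i-1}-Tx_{j-1}\|+\epsilon_i+\epsilon_j\le\|x_{i-1}-x_{j-1}\|+\epsilon_i+\epsilon_j.
\]
The key observation is that the shifted pair satisfies $0\le i-1\le j-1$ and $(i-1)+(j-1)=i+j-2<m+n$, so the induction hypothesis yields $\|x_{i-1}-x_{j-1}\|\le w_{i-1,j-1}$, which is exactly the summand appearing in \eqref{W}.

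It remains to treat the boundary term $i=0$, where $y_{i-1}=y_{-1}=x_0$ is not an iteration value and the inductive hypothesis does not apply. This is precisely where assumption $(\mbox{\sc h}_0)$ enters: writing $\|y_{-1}-y_{j-1}\|=\|x_0-Tx_{j-1}-e_j\|\le\|x_0-Tx_{j-1}\|+\epsilon_j\le\kappa+\epsilon_j$ and recalling the convention $w_{-1,j-1}=\kappa$, this matches the corresponding term of \eqref{W}, the extra nonnegative $\epsilon_0$ present there only loosening the bound. Substituting the two estimates back into the weighted sum reproduces the right-hand side of \eqref{W} and closes the induction. I expect the only genuine point requiring care to be the bookkeeping of the index shifts: verifying that every pair $(i-1,j-1)$ produced by the recursion is admissible (that is, $i-1\le j-1$) and has strictly smaller index-sum, so that the induction is well founded and the boundary case $i=0$ is cleanly isolated and absorbed by $(\mbox{\sc h}_0)$.
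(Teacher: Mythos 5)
Your proposal is correct and follows essentially the same route as the paper: apply Lemma \ref{sym} with $y_n=Tx_n+e_{n+1}$ and $y_{-1}=x_0$, bound the generic terms via nonexpansivity of $T$ together with the induction hypothesis, and absorb the $i=0$ boundary terms using $(\mbox{\sc h}_0)$ and the convention $w_{-1,j-1}=\kappa$. The only (immaterial) difference is that you induct on the index-sum $m+n$ whereas the paper inducts on $n$ with the hypothesis holding for all pairs $0\le i\le j<n$.
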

	\begin{proof}
		The proof is by induction on $n$. 
		The base case $n=0$ being trivial, let us suppose that $\|x_i-x_j\|\le w_{i,j}$ holds for all $i,j$ with $0\le i \le j<n$.
Applying Lemma \ref{sym} with $y_n=Tx_n+e_{n+1}$  and setting by convention $Tx_{-1}=y_{-1}=x_0$ and $e_0=0$,
we get the  inequality 
	\begin{equation*}
	\|x_m-x_n\|\le \sum_{i=0}^m\sum_{j=m+1}^n\pi^m_i\pi^n_j(\|Tx_{i-1}-Tx_{j-1}\|+\|e_i\|+\|e_j\|).
	\end{equation*}
The terms with $i=0$ can be bounded as 
\[\|Tx_{-1}-Tx_{j-1}\|=\|x_0-Tx_{j-1}\|\leq \kappa = w_{-1,j-1},\]
	 while for the remaining terms the nonexpansivity of $T$ and the induction hypothesis give $\|Tx_{i-1}-Tx_{j-1}\|\leq \|x_{i-1}-x_{j-1}\|\leq w_{i-1,j-1}$.
Hence
	$$\|x_m-x_n\|\le \sum_{i=0}^m\sum_{j=m+1}^n\pi^m_i\pi^n_j(w_{i-1,j-1}+\epsilon_i+\epsilon_j)=w_{m,n}$$
		 which completes the induction step.	
	 	\end{proof}
		
\subsection{Reduction to a Markov reward process}\label{Ap1}
The main step in the proof of Theorem \ref{thm:main} 
relies on a probabilistic reinterpretation of $w_{m,n}$
in terms of a Markov reward process evolving in $\mathbb{Z}^2$.
The process is similar to the Markov chain used in \cite{csv14},
except that we add rewards to account for the presence of errors,
which requires a finer analysis.

Namely, let $m<n$ be positive integers and consider a race between a fox at position $n$ that is trying to catch a hare
	located at position $m$. At each integer $i\in\NN$ the fox jumps over a hurdle to
	reach the position $i-1$. The jump succeeds with probability $(1-\alpha_i)$ in which case the
	process repeats, otherwise the fox falls at $i\!-\!1$ where it gets a reward $\epsilon_i$. 
	The fox catches the hare if it jumps successfully down to $m$ or below.
	Otherwise, the hare gets a chance to run towards the burrow located at $-1$ by following the
	same rules and with the same rewards. The process alternates until either the fox catches the hare, or the
	hare reaches the burrow. In the latter case the hare gets an additional reward $\kappa$.

This yields a Markov chain with state space  $\mathcal{S}=\{(m,n): 0\le m<n\}\cup \{f,h\}$ with $f, h$ two absorbing states
that represent respectively the cases in which the fox or the hare win the race (see Figure \ref{proc}).
Specifically, starting from a transient state $(m,n)$ with $0\le m<n$, the process moves with probability $\pi_i^m\pi_j^n$ to 
a new state of the form $(i\!-\!1,j\!-\!1)$ with $1\leq i \leq m<j\leq n$,  and otherwise  it is absorbed in state $f$ with probability
$\sum_{j=0}^m\pi_j^n$ and in state $h$ with probability $\pi_0^m\sum_{j=m+1}^n\pi_j^n$.
\begin{center}
	\begin{figure}[ht]
		\begin{tikzpicture}[scale=0.45,every node/.style={scale=0.6}]
		\draw[step=1cm,gray,very thin] (-2,-2) grid (10,10); 
		\draw[thick,->] (0,0) -- (9,0);
		\draw[thick,->] (0,0) -- (0,9);
		\foreach \i in {0,1,...,6}{
			\pgfmathsetmacro{\Start}{\i+1}
			\foreach \j in {\Start,...,8} 
			\draw[black,fill=black] (\i,\j) circle (0.1cm);
		}
		\draw[dashed,-] (-1,-1) -- (8,8);
		\draw[black,fill=black] (7,8) circle (0.1cm);
		\node at (5,8.65) {\Large $(m,n)$};
		\node[diamond,fill=black] at (-1,5) {};
		\node at (-1,5.65) {\Large $h$};
		\node[diamond,fill=black] at (5,3) {};
		\node at (4.95,2.3) {\Large $f$};
		\end{tikzpicture}
		\caption{The state space $\mathcal S$.\label{proc}}
	\end{figure}
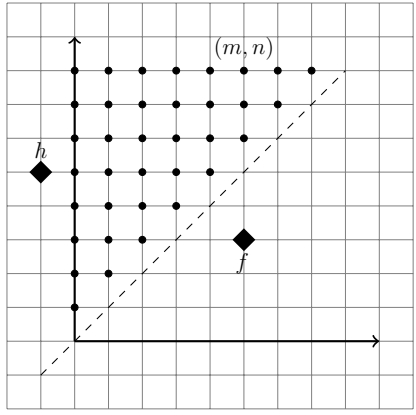
\end{center} 
\vspace{-3ex}
\noindent
When the process visits a
transient state $(i-1,j-1)$ the hare gets a reward $\epsilon_{i}$ and the fox gets a reward $\epsilon_{j}$, which combined
yield a total reward $R_{i-1,j-1}=\epsilon_{i}+\epsilon_{j}$. If the process reaches state $h$ the hare gets a 
reward $\kappa$ and the fox gets nothing, whereas at the absorbing state $f$ there is no reward. 
Then the total expected reward when the process starts at 
position $(m,n)$ with $0\le m\le n$ satisfies exactly the recursion \eqref{W} with boundary condition
$w_{-1,n}=\kappa$ corresponding to the reward collected at the absorbing state $h$.
This  provides an alternative way to compute $w_{n,n+1}$ and allows to establish the following bound.

\begin{proposition}\label{lemma1} 
Let $w_{m,n}$ be defined recursively by \eqref{W} with $w_{-1,n}=\kappa$,  where $\epsilon_n\geq 0$ and $\epsilon_0=0$. 
Then, for all $n\in\NN$ we have
\begin{equation}\label{eq:estimate1}
		\dfrac{w_{n,n+1}}{\alpha_{n+1}} \leq \kappa\, \sigma(\tau_n) + \sum_{i=1}^{n}2\alpha_i\epsilon_i\, \sigma(\tau_n\!-\!\tau_i)+\epsilon_{n+1}.
	\end{equation}
\end{proposition}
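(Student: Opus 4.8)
The plan is to read $w_{n,n+1}$ as the total expected reward of the fox--hare chain started at $(n,n+1)$ and to estimate that reward by linearity of expectation. The first transition is forced: facing only hurdle $n+1$, the fox either clears it (probability $1-\alpha_{n+1}$), catching the hare at once with no reward, or fails (probability $\alpha_{n+1}$), dropping to level $n$ and collecting $\epsilon_{n+1}$, after which the hare is about to run. Dividing by $\alpha_{n+1}$ gives
\begin{equation*}
\frac{w_{n,n+1}}{\alpha_{n+1}}=\epsilon_{n+1}+W,
\end{equation*}
where $W$ is the expected reward accumulated from the configuration in which both animals stand at level $n$. Since every transition strictly lowers both coordinates, each transient state is entered at most once, and linearity yields
\begin{equation*}
W=\kappa\,P(h)+\sum_{i=1}^{n}\epsilon_{i}\bigl(q^{\mathrm f}_{i-1}+q^{\mathrm h}_{i-1}\bigr),
\end{equation*}
where $P(h)$ is the probability that the hare reaches the burrow and $q^{\mathrm f}_{i-1}$ (resp.\ $q^{\mathrm h}_{i-1}$) is the probability that the fox (resp.\ the hare) lands at level $i-1$ during a transient transition. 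It then suffices to bound these three ingredients.

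The terminal term is inherited from the error-free analysis. When $\epsilon\equiv 0$ one has $W=\kappa\,P(h)$, so the error-free estimate established in \cite{csv14} (this is the content of \eqref{bound}) amounts exactly to the random-walk bound $P(h)\le\sigma(\tau_n)$, which supplies the leading term $\kappa\,\sigma(\tau_n)$.

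For the reward terms the key point is that a landing at level $i-1$ requires two essentially independent things. First, the Bernoulli trial at hurdle $i$ must fail, which happens with probability $\alpha_i$ and explains that factor. Second, the race must still be undecided when that level is reached, i.e.\ the fox must not have already caught the hare during the descent from level $n$ down to level $i$; note that, once no catch has occurred, the fox lies strictly above the hare, so the landing state is automatically transient. This non-absorption probability is governed by the same random-walk mechanism as $P(h)$, but now only over the top segment of the range, which involves the hurdles $i+1,\dots,n$ and hence has accumulated variance $\tau_n-\tau_i=\sum_{k=i+1}^{n}\alpha_k(1-\alpha_k)$; the estimate of \cite{csv14} applied to this segment bounds it by $\sigma(\tau_n-\tau_i)$. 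Since either animal may be the one to land at level $i-1$, this gives the factor $2$, so that $q^{\mathrm f}_{i-1}+q^{\mathrm h}_{i-1}\le 2\alpha_i\,\sigma(\tau_n-\tau_i)$. Substituting the three bounds into $W$ and restoring $\epsilon_{n+1}$ produces \eqref{eq:estimate1}.

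The hard part will be this last (survival) estimate: proving that a transient landing at level $i-1$ occurs with probability at most $\alpha_i\,\sigma(\tau_n-\tau_i)$, rather than with the trivial bound $\alpha_i$. One must cleanly separate the event ``fail at hurdle $i$'' (which contributes $\alpha_i$) from the event ``the fox has not caught the hare before level $i$ is reached'' (which contributes $\sigma(\tau_n-\tau_i)$), and show that the latter depends only on the hurdles $i+1,\dots,n$, so that the variance entering the concentration bound is precisely $\tau_n-\tau_i$. I expect this to hinge on a strong-Markov restart of the coupled fox--hare walk at its first visit to level $i$, together with a monotonicity argument that reduces the non-absorption probability over the top segment to the very random-walk estimate already established for the exact iteration in \cite{csv14}.
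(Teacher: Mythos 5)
Your proposal is correct and follows essentially the same route as the paper: the same Markov reward interpretation of $w_{n,n+1}$, the same per-level decomposition of the expected reward into landing probabilities for each animal, and the same key bound of each landing probability by $\alpha_i$ times the survival probability $\sigma(\tau_n\!-\!\tau_i)$ obtained from the random-walk estimate of \cite{csv14} (their Proposition 4), with independence of the Bernoulli failure at hurdle $i$ from the survival event over hurdles $i+1,\dots,n$. The only cosmetic difference is that you condition on the first forced transition to extract the factor $\alpha_{n+1}$ and the reward $\epsilon_{n+1}$ up front, whereas the paper keeps unconditional probabilities and lets the factor $\alpha_{n+1}$ appear in each term before dividing it out.
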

\begin{proof}

Consider the process starting at state $(n,n+1)$. Let $R^H_i$ denote the event 
in which the hare  collects the reward at the site $i-1$, so that the total expected reward 
of the hare can be expressed as
$$T^H=\kappa\,\PP(R_0^H)+\sum_{i=1}^{n}\epsilon_i\,\PP(R^H_i).$$
The event $R^H_i$ occurs iff the process visits a state $(i-1,j-1)$ for some $j>i$, that is to say,
if the hare is not captured before the $i$-th hurdle and it fails this $i$-th jump. 
 Let $F_i$ and $H_i$ denote independent Bernoulli variables
representing the failure of the jump over the $i$-th hurdle for the fox and hare respectively,
 with $\PP(F_i=1)=\PP(H_i=1)=\alpha_i$. 
Denoting $S_{i+1}$  the event that the hare is not captured before the $(i\!+\!1)$-th hurdle we have
$R_i^H=\{H_i=1\}\cap S_{i+1}$ so that $\PP(R^H_i)=\alpha_i\,\PP(S_{i+1})$.
Now, the event $S_{i+1}$ can be written as
$$S_{i+1}=\{\mbox{$\sum_{k=j}^{n+1}F_k>\sum_{k=j}^nH_k$ for all $j=i+1,\ldots,n+1$}\}$$
which translates the fact that the 
hare is not  captured provided that the fox falls more often than the hare.
For $j=n+1$ the condition above amounts to $F_{n+1}=1$ so that denoting $Z_k=F_k-H_k$ we
can write
$$S_{i+1}=\{F_{n+1}=1\}\cap\{\mbox{$\sum_{k=j}^{n}Z_k\geq 0$ for all $j=i+1,\ldots,n$}\}$$
and  we may  use  \cite[Proposition 4]{csv14} to get 
$$\PP(S_{i+1})=\alpha_{n+1}\PP(\mbox{$\sum_{k=j}^{n}Z_k\geq 0$ for all $j=i+1,\ldots,n$})\leq \alpha_{n+1}\,\sigma(\tau_n\!-\!\tau_i).$$
From this we obtain $\PP(R^H_i)\le \alpha_i\alpha_{n+1}\sigma(\tau_n\!-\!\tau_i)$ and, noting that $\alpha_0=1$, we get
\begin{equation}\label{eq:rewH}
T^H\leq \alpha_{n+1}\left[\kappa\, \sigma(\tau_n)+\sum_{i=1}^n\alpha_i\epsilon_i\,\sigma(\tau_n\!-\!\tau_i)\right].
\end{equation}
 
 A similar argument can be used to bound the total reward collected by the fox. 
 Indeed, denoting $R^F_j$ the event in which the hare  collects the reward at the site $j-1$, 
the total expected reward of the fox is
$$T^F=\sum_{j=1}^{n+1}\epsilon_j\,\PP(R^F_j).$$
In this case the event $R^F_j$ corresponds to the fact that the process visits a state $(i-1,j-1)$ for some $i\in\{1,\ldots,j-1\}$.
This requires that the fox fails the jump of the $j$-th hurdle, that the hare has not been captured, 
and that after the fox rests at $j-1$ the hare falls before reaching the burrow. Ignoring the latter condition 
we get  the inclusion
$$R^F_j\subseteq\{F_j=1\}\cap\{\mbox{$\sum_{k=i}^{n+1}F_k>\sum_{k=i}^nH_k$ for all $i=j,\ldots,n+1$}\}.$$
For $j=n+1$ this gives $R^F_{n+1}\subseteq\{F_{n+1}=1\}$ so that $\PP(R^F_{n+1})\leq\alpha_{n+1}$.
Now,  for $j=1,\ldots,n$ the condition $\sum_{k=i}^{n+1}F_k>\sum_{k=i}^nH_k$  is superflous when $i=j$ as it follows
 from  the same condition for $i=j+1$ and the fact that $F_j=1$. Hence we have
$R^F_j\subseteq\{F_j=1\}\cap S_{j+1}$ which yields as before the upper bound $\PP(R^F_j)\le\alpha_j\alpha_{n+1}\,\sigma(\tau_n\!-\!\tau_j)$. 
From these bounds we get
\begin{equation}\label{eq:rewF}
T^F\leq\alpha_{n+1}\left[\sum_{j=1}^n\alpha_j\epsilon_j\,\sigma(\tau_n\!-\!\tau_j)+\epsilon_{n+1}\right].
\end{equation}
Since $w_{n,n+1}=T^H+T^F$, combining \eqref{eq:rewH} and \eqref{eq:rewF} we readily get
\eqref{eq:estimate1}.
\end{proof}

\subsection{Proof of Theorem \ref{thm:main}}
Using   \eqref{eq:basic} and Corollary \ref{cor:2} we get
\begin{equation*}\|x_n-Tx_n\|\leq  \frac{w_{n,n+1}}{\alpha_{n+1}}+\epsilon_{n+1}
 \end{equation*}
which combined with \eqref{eq:estimate1} yields  \eqref{bound_inexact}. 
It remains to show that $\|x_n-Tx_n\|\to 0$ when $\tau_n\to\infty$, $\epsilon_n\to 0$, and $\sum_{k\geq 1}\alpha_k\epsilon_k<\infty$. Using \eqref{bound_inexact} 
and considering a fixed $m\in\NN$, we can use the bound $\sigma(\tau_n\!-\tau_i)\leq 1$ for the terms $i=m+1,\ldots,n$ to get
\begin{eqnarray}\label{eq:rate1}
\|x_n-Tx_n\|&\leq&
{\kappa}\,{\sigma(\tau_n)}
+\sum_{i=1}^{m}{2\alpha_i\epsilon_i}\,{\sigma(\tau_n\!-\!\tau_i)}
+\sum_{i=m+1}^{n}2\alpha_i\epsilon_i
+2\epsilon_{n+1}.
\end{eqnarray}
Since $\sigma(\tau_n\!-\tau_i)\to 0$ as $n\to\infty$ we obtain
$\limsup_{n\to\infty}\|x_n-Tx_n\|
\leq \sum_{i=m+1}^{\infty}2\alpha_i\epsilon_i$
so that the conclusion follows by letting $m\to\infty$.
\hfill$\Box$

\subsection{Convergence of the iterates}\label{sec:cvgce}
Under some additional conditions, the fact that $\|x_n-Tx_n\|\to 0$ implies 
the existence of fixed points and the convergence of the \eqref{IKM} iteration.
The arguments are quite standard (see {\em e.g.}  \cite[Goebel and Kirk]{gok}) 
but for completeness we sketch the proof. 
We recall that $X$ is said to have Opial's property if for every weakly convergent sequence 
$x_n\rightharpoonup x$ we have
$$\liminf_{n\to\infty}\|x_n-x\|<\liminf_{n\to\infty}\|x_n-y\|\quad\forall y\neq x.$$

 \begin{theorem}\label{thm:cgce} Let $(x_n)_{n\in\NN}$ be generated by \eqref{IKM} and suppose that 
    $\|x_n\!-\!Tx_n\|\to 0$ and $\sum_{k\geq 1}\alpha_k\|e_k\|\!<\!\infty$.
\\[1ex]
{\em a)} If $T(C)$ is relatively compact then $x_n$ converges strongly to a fixed point of $T$.\\[1ex]
{\em b)} If $X$ is uniformly convex and $x_n$ remains bounded then $\fix(T)\neq\emptyset$. 
Moreover, if $X$ satisfies Opial's property then $x_n$ converges weakly to a fixed point of $T$. 
  \end{theorem}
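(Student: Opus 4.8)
The plan is to derive both parts from a single quasi-Fej\'er estimate. First I would fix an arbitrary $p\in\fix(T)$ and use $Tp=p$ together with the nonexpansivity of $T$ to control the \eqref{IKM} update. Writing
\[
x_{n+1}-p=(1\!-\!\alpha_{n+1})(x_n-p)+\alpha_{n+1}(Tx_n-p)+\alpha_{n+1}e_{n+1}
\]
and taking norms yields $\|x_{n+1}-p\|\le\|x_n-p\|+\alpha_{n+1}\|e_{n+1}\|$. Since $\sum_{k\ge1}\alpha_k\|e_k\|<\infty$, the standard lemma on almost-nonincreasing sequences guarantees that $\lim_{n\to\infty}\|x_n-p\|$ exists for every fixed point $p$. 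This convergence of the residual distances is the common engine for both statements.

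For part (a), I would use the relative compactness of $T(C)$ to extract a subsequence with $Tx_{n_k}\to z$. The asymptotic regularity $\|x_n-Tx_n\|\to0$ then forces $x_{n_k}\to z$, and since $C$ is closed we get $z\in C$. Continuity of the nonexpansive map $T$ gives $Tx_{n_k}\to Tz$, whence $Tz=z$, so $z\in\fix(T)$. Applying the quasi-Fej\'er estimate with $p=z$, the limit $\lim_n\|x_n-z\|$ exists and must equal its subsequential value $0$; therefore $x_n\to z$ strongly.

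For part (b), reflexivity of the uniformly convex space $X$ and boundedness of $(x_n)$ provide a weakly convergent subsequence $x_{n_k}\rightharpoonup x$, with $x\in C$ because $C$ is weakly closed. The crux is to promote this weak limit to a fixed point: this is exactly Browder's demiclosedness principle, stating that for nonexpansive $T$ on a uniformly convex space, $x_{n_k}\rightharpoonup x$ and $\|x_{n_k}-Tx_{n_k}\|\to0$ imply $Tx=x$. I would invoke it as a known result (see \cite{gok}), and this is the step I expect to be the main obstacle, since it is the only genuinely non-elementary ingredient. It immediately yields $\fix(T)\neq\emptyset$.

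Finally, to obtain weak convergence of the whole sequence under Opial's property, I would show the weak subsequential limit is unique. Suppose $x_{n_k}\rightharpoonup x$ and $x_{m_j}\rightharpoonup y$ with $x\neq y$; by demiclosedness both are fixed points, so by the first paragraph $\ell_x:=\lim_n\|x_n-x\|$ and $\ell_y:=\lim_n\|x_n-y\|$ exist. Opial's inequality along the first subsequence gives $\ell_x<\ell_y$, and along the second gives $\ell_y<\ell_x$, a contradiction. Hence every weak cluster point coincides, and boundedness together with reflexivity forces $x_n\rightharpoonup x\in\fix(T)$.
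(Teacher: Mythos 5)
Your proposal is correct and follows essentially the same route as the paper: the quasi-Fej\'er inequality $\|x_{n+1}-p\|\le\|x_n-p\|+\alpha_{n+1}\|e_{n+1}\|$ (the paper's \eqref{mmm}) to get existence of $\ell(p)=\lim_n\|x_n-p\|$, strong subsequential extraction plus continuity for part (a), and demiclosedness of $I-T$ (Browder) combined with Opial's property for part (b). The only cosmetic difference is that the paper packages the limit argument as monotonicity of $\|x_n-x\|+\sum_{k>n}\alpha_k\|e_k\|$ rather than citing the almost-nonincreasing-sequence lemma.
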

 \begin{proof} From \eqref{mmm} we see that for  $x\in\fix(T)$ the sequence $\|x_{n}-x\|+\sum_{k>n}\alpha_k\|e_k\|$  
 decreases with $n$ and hence it converges. Since the tail $\sum_{k> n}\alpha_k\|e_k\|$ tends to 0 it 
 follows that  the limit $\ell(x)=\lim_{n\to\infty}\|x_n-x\|$ is well defined.

 Now, in case a) we may extract a strongly convergent subsequence $T(x_{n_k})\to  x$.
 Since $\|x_n-Tx_n\|\to 0$ we also have $x_{n_k}\to  x$ and therefore $x\in\fix(T)$.
 It follows that $\lim_{n\to\infty}\|x_{n}-x\|=\lim_{k\to\infty}\|x_{n_k}-x\|=0$ so that $x_n\to x$ in the strong sense.
 
 In case b) we can extract a weakly convergent 
 subsequence $x_{n_k}\rightharpoonup x$ and
 since $I-T$ is demiclosed (see \cite{bro67}) the assumption $\|x_n-Tx_n\|\to 0$ 
implies that $x$ is a fixed point, hence $\fix(T)\neq\emptyset$. Moreover,
Opial's property implies that $x_n$ has only one weak cluster point: if $x_{n'_k}\rightharpoonup y$ is another 
weakly convergent subsequence with $y\neq x$ then
\begin{eqnarray*}
&&\ell(x)=\liminf_{k\to\infty}\|x_{n_k}-x\|<\liminf_{k\to\infty}\|x_{n_k}-y\|=\ell(y)\\
&&\ell(y)=\liminf_{k\to\infty}\|x_{n'_k}-y\|<\liminf_{k\to\infty}\|x_{n'_k}-x\|=\ell(x)
\end{eqnarray*}
which yields a contradiction and therefore $x_n\rightharpoonup x$.
 \end{proof}
\begin{remark} \label{Rk1} {\em 
1) The existence of fixed points when $T(C)$ is relatively compact goes back to 
the original work of Krasnosel'skii in 1955, whereas for $C$ bounded in a uniformly convex space 
this was proved in 1965 independently by Browder, G\"ohde, and Kirk.\\[0.5ex]
2) Without the summability condition $\sum_{k\geq 1}\alpha_k\|e_k\|<\infty$ the iterates might fail 
to converge as illustrated by the trivial example $Tx=x$ where $x_n=\sum_{k=1}^n\alpha_k e_k$.}
\end{remark}

 \subsection{The assumption $(\mbox{\sc h}_0)$}\label{sec:h0}
 Theorem \ref{thm:main} is based on assumption $(\mbox{\sc h}_0)$ which requires 
 simultaneously that $i$) the iterates remain in $C$, and $ii$) $\|Tx_n-x_0\|\leq\kappa$ for some constant $\kappa$.
 
 For a bounded domain $C$ property $ii$) holds with $\kappa=\diam(C)$ so that one only needs to check $i$).
 This holds automatically for the exact iteration \eqref{KM} and more generally when the errors $e_{n+1}$
 are such that $Tx_n+e_{n+1}\in C$.
Note also that if $X$ is a Hilbert space one could replace   
 $T$ by $\tilde T=T\circ P_C$ where $P_C$ is the projection onto the closed convex set $C$ so that $\tilde T:X\to C$ 
 and $(\mbox{\sc h}_0)$  holds with $\kappa=\diam(C)$. When $X$ is not a Hilbert space the projection $P_C$ 
 might not exist and, even if it does, it might fail to be nonexpansive. Moreover, even in a Hilbert 
 setting the exact projection $P_C$ might be difficult to compute exactly. To deal with these cases, in \S\ref{sec:bddom}  
 we will show how \eqref{IKM} can be adapted using approximate projections. 

When $T$ is defined on the whole space condition $i$) is trivial and one
only has to check $ii$). The following result describes two simple situations where this holds.
\begin{proposition}\label{p123} Let $T:X\to X$ be a nonexpansive map.\\[1ex]
{\em a)} If $T$ has a bounded range then \eqref{h0} holds with $\kappa=\sup_{x\in X}\|Tx-x_0\|$.\\[1ex]
{\em b)} If $\fix(T)\neq\emptyset$ and the sum $S=\sum_{k=1}^\infty\alpha_k\|e_k\|$ is finite
then \eqref{h0} holds with $\kappa=2\,\mbox{\rm dist}(x_0,\fix(T))+S.$
\end{proposition}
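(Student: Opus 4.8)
The plan is to observe that, since $T$ is defined on all of $X$, the domain $C$ equals $X$ and the containment requirement $x_n\in C$ in $(\mbox{\sc h}_0)$ holds automatically; hence in both cases it suffices to exhibit a finite $\kappa$ bounding $\|Tx_n-x_0\|$ uniformly in $n$.

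For part a) this is immediate. If the range of $T$ is bounded then $\kappa:=\sup_{x\in X}\|Tx-x_0\|<\infty$, and since each iterate satisfies $x_n\in X$ we trivially have $\|Tx_n-x_0\|\le\kappa$ for all $n$.

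The substance lies in part b). First I would fix an arbitrary $p\in\fix(T)$ and establish the Fej\'er-type estimate with errors
\[\|x_{n+1}-p\|\le\|x_n-p\|+\alpha_{n+1}\|e_{n+1}\|,\]
which is precisely the inequality \eqref{mmm} invoked earlier. It follows directly from the recursion \eqref{IKM}: writing $x_{n+1}-p=(1-\alpha_{n+1})(x_n-p)+\alpha_{n+1}(Tx_n-p)+\alpha_{n+1}e_{n+1}$, applying the triangle inequality, and using nonexpansivity in the form $\|Tx_n-p\|=\|Tx_n-Tp\|\le\|x_n-p\|$ to collapse the first two convex terms into $\|x_n-p\|$. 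Iterating this bound then yields $\|x_n-p\|\le\|x_0-p\|+\sum_{k=1}^n\alpha_k\|e_k\|\le\|x_0-p\|+S$.

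It remains to convert this into a bound on $\|Tx_n-x_0\|$. Using nonexpansivity once more,
\[\|Tx_n-x_0\|\le\|Tx_n-Tp\|+\|p-x_0\|\le\|x_n-p\|+\|x_0-p\|\le 2\|x_0-p\|+S,\]
and since this holds for every $p\in\fix(T)$ and every $n$, with a right-hand side independent of $n$, I would take the infimum over $p\in\fix(T)$ to conclude $\|Tx_n-x_0\|\le 2\,\mbox{\rm dist}(x_0,\fix(T))+S$ for all $n$, which is exactly $(\mbox{\sc h}_0)$ with the claimed $\kappa$. There is no serious obstacle in this argument; the only point meriting a moment's care is that the per-$p$ bound is uniform in $n$, so the infimum over the (possibly non-closed) fixed-point set may legitimately be taken after fixing $n$, and the distance need not be attained for the reasoning to carry through.
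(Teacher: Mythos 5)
Your proof is correct and follows essentially the same route as the paper: part a) is immediate, and part b) rests on the same Fej\'er-type inequality $\|x_n-p\|\le\|x_{n-1}-p\|+\alpha_n\|e_n\|$, iterated and combined with nonexpansivity before taking the infimum over $p\in\fix(T)$. Nothing further is needed.
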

\begin{proof}
Property a) is self evident. In order to prove b) we note that for any given   
$x\in\fix(T)$ we have
\begin{eqnarray}\label{mmm}
 \norm{x_{n}-x}&=&\norm{(1-\alpha_{n})x_{n-1}+\alpha_{n}(Tx_{n-1}+e_{n})-x}\nonumber\\
	&\le&(1-\alpha_{n})\|x_{n-1}-x\|+\alpha_{n}\|Tx_{n-1}-Tx\|+\alpha_{n}\|e_{n}\|\\
	&\le&\|x_{n-1}-x\|+\alpha_{n}\|e_{n}\|\nonumber
	\end{eqnarray}
from which we get inductively 
\begin{equation}\label{eq:est}
 \|x_n-x\|\leq \|x_0-x\|+\mbox{$\sum_{k=1}^n\epsilon_k$}\|e_k\|\leq \|x_0-x\|+S
\end{equation}
and then 
$$\|Tx_n-x_0\|\leq \|Tx_n-x\|+\|x-x_0\|\leq  \|x_n-x\|+\|x-x_0\|\leq 2\|x_0-x\|+S.$$
The conclusion follows by taking the infimum over $x\in\fix(T)$.
\end{proof}
\vspace{1ex}
\begin{remark}{\em 
From \eqref{eq:est} we get $x_n\in B(x_0,\kappa)$ so  in part b) of the previous Proposition it suffices $T$\\ to be defined on a domain $C$ that contains this ball.}
\end{remark}

\section{Rates of convergence}
\label{sec:rates}
In this section we use the bound \eqref{bound_inexact} to estimate the rate of convergence of the fixed point residuals.

\begin{theorem}\label{thm:2}
Assume $(\mbox{\sc h}_0)$. Suppose  that $\sum_{k\geq 1}\|e_k\|<\infty$ and 
that  $\alpha_n$ is bounded away from 0 and 1. Then there exists a constant $\nu\ge 0$ such that
	\begin{equation}\label{bound:2}
	\|x_n-Tx_n\|\le \frac{\nu}{\sqrt{n}}+\mbox{$\sum_{i\ge \lfloor\frac{n}{2}\rfloor}2\|e_i\|$}.
	\end{equation}
	Moreover, if  $\varphi:[0,\infty)\to[0,\infty)$ is nondecreasing and
$\mu=\sum_{k\geq 1}\varphi(k)\|e_k\|<\infty$, then  
\begin{equation}\label{bound:3}
	\|x_n-Tx_n\|\le \frac{\nu}{\sqrt{n}}+\frac{2\mu}{\varphi(\lfloor\frac{n}{2}\rfloor)}.
	\end{equation}
In particular, if
$\sum_{k\geq 1}k^a \|e_k\|<\infty$ for some $a\ge 0$ then
  $\|x_n-Tx_n\|= O(1/n^b)$ with $b=\min\{\frac{1}{2},a\}$.
\end{theorem}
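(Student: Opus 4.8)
The plan is to feed the master bound \eqref{bound_inexact}, with the sharpest choice $\epsilon_k=\|e_k\|$, into a dyadic splitting of the error sum. The starting observation is that since $\alpha_n\in[\delta,1-\delta]$ for some $\delta>0$, each increment satisfies $\alpha_k(1-\alpha_k)\ge c:=\delta(1-\delta)>0$, whence $\tau_n\ge cn$ and, more generally, $\tau_n-\tau_i=\sum_{k=i+1}^n\alpha_k(1-\alpha_k)\ge c(n-i)$ for all $0\le i\le n$. Consequently $\sigma(\tau_n)\le 1/\sqrt{\pi c n}$ and $\sigma(\tau_n-\tau_i)\le 1/\sqrt{\pi c(n-i)}$, and this is exactly what converts the ``area'' factors $\tau_n$ in \eqref{bound_inexact} into explicit powers of $n$.

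First I would dispatch the leading term, which is immediately $\kappa\,\sigma(\tau_n)\le \kappa/\sqrt{\pi c n}=O(1/\sqrt{n})$. The heart of the argument is the middle sum $\sum_{i=1}^n 2\alpha_i\|e_i\|\,\sigma(\tau_n-\tau_i)$, which I would split at $m=\lfloor n/2\rfloor$. For the low indices $1\le i\le m$ one has $n-i\ge n/2$, so $\sigma(\tau_n-\tau_i)\le 1/\sqrt{\pi c n/2}$; bounding $\alpha_i\le 1$ and pulling this uniform factor out leaves $\frac{2}{\sqrt{\pi c n/2}}\sum_{i=1}^m\|e_i\|\le \frac{C}{\sqrt{n}}\sum_{k\ge 1}\|e_k\|=O(1/\sqrt{n})$, where the summability of $\|e_k\|$ is used precisely here. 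For the high indices $m<i\le n$ I would instead discard the decay, using the crude bound $\sigma(\tau_n-\tau_i)\le 1$ together with $\alpha_i\le 1$, which leaves $\sum_{i=m+1}^n 2\|e_i\|$. Finally the trailing term $2\|e_{n+1}\|$ fits into the same tail since $n+1\ge m$. Collecting the several $O(1/\sqrt{n})$ contributions into a single constant $\nu$ yields \eqref{bound:2}.

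For \eqref{bound:3} only the tail estimate changes. Since $\varphi$ is nondecreasing, every index $i\ge m$ satisfies $\varphi(i)\ge\varphi(m)$, so $\|e_i\|\le \varphi(i)\|e_i\|/\varphi(m)$ and hence $\sum_{i\ge m}2\|e_i\|\le \frac{2}{\varphi(m)}\sum_{k\ge 1}\varphi(k)\|e_k\|=\frac{2\mu}{\varphi(\lfloor n/2\rfloor)}$, which replaces the explicit tail in \eqref{bound:2} by the announced quantity. The ``in particular'' claim is the special case $\varphi(k)=k^a$: this $\varphi$ is nondecreasing for $a\ge 0$, the hypothesis $\mu=\sum_k k^a\|e_k\|<\infty$ forces $\sum_k\|e_k\|<\infty$ (because $k^a\ge 1$ when $k\ge 1$ and $a\ge 0$), so \eqref{bound:3} applies; and since $\lfloor n/2\rfloor\ge n/4$ for $n\ge 2$ gives $\varphi(\lfloor n/2\rfloor)\ge (n/4)^a$, the second term is $O(1/n^a)$. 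Combining with the $O(1/\sqrt{n})$ first term produces the rate $O(1/n^{\min\{1/2,a\}})$.

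I expect no serious obstacle; the only point requiring care is the choice of the split at $m=\lfloor n/2\rfloor$, which is what simultaneously keeps $n-i$ of order $n$ on the low-index block (so that $\sigma$ supplies the $1/\sqrt{n}$ decay) and confines the non-decaying error mass to the tail $\sum_{i\ge m}\|e_i\|$. One should also verify that the constant $\nu$ obtained by summing the three $O(1/\sqrt{n})$ contributions is genuinely independent of $n$, depending only on $\kappa$, $\delta$, and $\sum_k\|e_k\|$.
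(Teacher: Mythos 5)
Your proposal is correct and follows essentially the same route as the paper: both start from the bound \eqref{bound_inexact} (in the form \eqref{eq:rate1}), split the error sum at $m=\lfloor n/2\rfloor$, use $\tau_n-\tau_i\geq\beta(n-i)\geq\beta n/2$ on the low-index block and $\sigma\leq 1$ on the tail, and then derive \eqref{bound:3} from the monotonicity of $\varphi$ exactly as in \eqref{ser}. The only cosmetic difference is that you make the constant $\nu$ and the absorption of the trailing term $2\|e_{n+1}\|$ into the tail fully explicit, which the paper leaves implicit.
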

\begin{proof} Take $\beta>0$ such that $\alpha_n(1\!-\!\alpha_n)\geq\beta$ for all $n\geq 1$ and define  $\nu=(\kappa+2\sqrt{2}\,\sum_{k\geq 1}\|e_k\|)/\sqrt{\pi\beta}$.
Since $\sigma(\tau_n\!-\!\tau_i)\leq 1/\sqrt{\pi\beta(n-i)}$ and $\alpha_i\leq 1$, the inequality \eqref{bound:2} follows directly from 
\eqref{eq:rate1} by
taking $m=\lfloor\frac{n}{2}\rfloor$, while \eqref{bound:3}
follows from this and the inequality 
\begin{equation}\label{ser}
\varphi(m)\sum_{k\ge m}\|e_k\|\leq \sum_{k\ge m}\varphi(k)\|e_k\|\leq \mu.
\end{equation}
The last claim $\|x_n-Tx_n\|= O(1/n^b)$ follows from \eqref{bound:3}  by taking $\varphi(k)=k^a$.
\end{proof}
\begin{remark} {\em 
Note that in \eqref{ser} the tail $\mu_m=\sum_{k\ge m}\varphi(k)\|e_k\|$ tends to 0
so that  $\sum_{k\ge m}\|e_k\|=o(1/\varphi(m))$. In particular, for $a<1/2$ the last claim in the previous result 
can be strengthened to $\|x_n-Tx_n\|= o(1/n^a)$.}
\end{remark}

The previous result derives a rate of convergence from a control on the sum $\sum_{k\geq 1}\varphi(k)\|e_k\|<\infty$. 
The next Lemma deals with the case where we control the errors $\|e_n\|\leq\epsilon_n$ rather than their sum.
\begin{lemma}\label{lemma111}
Let  $\eta=\sqrt{1+{4}/{\pi}}$. If $\epsilon_n \leq (1\!-\!\alpha_n)f(\tau_n)$ with $f : [0,\infty) \to [0,\infty)$ nonincreasing, then
\begin{equation}\label{term}
\sum_{i=1}^{n}{2\alpha_i\epsilon_i}\,{\sigma(\tau_n\!-\!\tau_i)} \leq\eta \int_{0}^{\tau_{n}} \!\!\!\frac{f(s)}{\sqrt{\tau_n-s}}\;ds
\end{equation}
\end{lemma}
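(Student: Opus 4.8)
The plan is to bound each summand $2\alpha_i\epsilon_i\,\sigma(\tau_n-\tau_i)$ by a multiple of the integral of $f(s)/\sqrt{\tau_n-s}$ over the increment interval $[\tau_{i-1},\tau_i]$, and then telescope. Write $\delta_i=\alpha_i(1-\alpha_i)=\tau_i-\tau_{i-1}$, so that $0\le\delta_i\le\tfrac14$ and $\tau_0=0$. The hypothesis $\epsilon_i\le(1-\alpha_i)f(\tau_i)$ gives $\alpha_i\epsilon_i\le\delta_i f(\tau_i)$, so it suffices to prove that the sum $\sum_{i=1}^n 2\delta_i f(\tau_i)\,\sigma(\tau_n-\tau_i)$ is bounded by $\eta\int_0^{\tau_n} f(s)/\sqrt{\tau_n-s}\,ds$. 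Since $f$ is nonincreasing, on $[\tau_{i-1},\tau_i]$ we have $f(s)\ge f(\tau_i)$, whence
\[
\int_{\tau_{i-1}}^{\tau_i}\frac{f(s)}{\sqrt{\tau_n-s}}\,ds\ge f(\tau_i)\int_{\tau_{i-1}}^{\tau_i}\frac{ds}{\sqrt{\tau_n-s}}=2f(\tau_i)\bigl(\sqrt{\tau_n-\tau_{i-1}}-\sqrt{\tau_n-\tau_i}\bigr).
\]
Thus it is enough to establish, for each $i$, the pointwise estimate $\delta_i\,\sigma(\tau_n-\tau_i)\le\eta\bigl(\sqrt{\tau_n-\tau_{i-1}}-\sqrt{\tau_n-\tau_i}\bigr)$ (when $f(\tau_i)=0$ the desired bound is trivial, otherwise cancel the common factor $2f(\tau_i)>0$).

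Setting $a=\tau_n-\tau_i\ge0$ and $d=\delta_i\in[0,\tfrac14]$, so that $\tau_n-\tau_{i-1}=a+d$, and rationalizing $\sqrt{a+d}-\sqrt a=d/(\sqrt{a+d}+\sqrt a)$, this reduces to the scalar inequality
\[
\sigma(a)\,(\sqrt{a+d}+\sqrt a)\le\eta,\qquad a\ge0,\ 0\le d\le\tfrac14,
\]
which is the crux of the argument. I would prove it by squaring: from $2\sqrt{a(a+d)}\le 2a+d$ one gets $(\sqrt{a+d}+\sqrt a)^2\le 4a+2d$, hence $\sigma(a)^2(\sqrt{a+d}+\sqrt a)^2\le 4\,a\,\sigma(a)^2+2d\,\sigma(a)^2$. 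Here the key uniform bound is $a\,\sigma(a)^2=\min\{a,1/\pi\}\le 1/\pi$, valid for all $a\ge0$ directly from $\sigma(a)^2=\min\{1,1/(\pi a)\}$; together with $\sigma(a)^2\le1$ and $d\le\tfrac14$ (so $2d\,\sigma(a)^2\le1$) this yields $\sigma(a)^2(\sqrt{a+d}+\sqrt a)^2\le 4/\pi+1=\eta^2$, exactly the required constant.

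Summing the per-interval inequalities over $i=1,\dots,n$, the right-hand sides telescope into the single integral $\eta\int_{\tau_0}^{\tau_n} f(s)/\sqrt{\tau_n-s}\,ds=\eta\int_0^{\tau_n} f(s)/\sqrt{\tau_n-s}\,ds$, which gives \eqref{term}. Two minor points remain: the last interval $i=n$ has $a=0$, where $\sigma(0)=1$ and the integrand is singular but integrable, so the same scalar inequality (with $a=0$, giving $\sqrt d\le\tfrac12\le\eta$) still applies; and the integral is finite since $f$, being nonincreasing, is bounded by $f(0)$ on $[0,\tau_n]$ while $1/\sqrt{\tau_n-s}$ is integrable. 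The main obstacle is precisely the scalar inequality $\sigma(a)(\sqrt{a+d}+\sqrt a)\le\eta$: the summand evaluates $\sigma$ at the right endpoint $\tau_i$, where $\sigma$ is largest, so a naive termwise comparison against the integrand $1/\sqrt{\tau_n-s}$ fails; the squaring step, combined with the sharp bound $a\,\sigma(a)^2\le1/\pi$ and the range $d\le\tfrac14$, is what absorbs this endpoint mismatch and produces the constant $\eta=\sqrt{1+4/\pi}$.
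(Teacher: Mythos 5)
Your proof is correct and follows essentially the same route as the paper's: both compare the $i$-th term with $\eta\int_{\tau_{i-1}}^{\tau_i} f(s)/\sqrt{\tau_n-s}\,ds$ using the monotonicity of $f$ together with the two bounds $(\tau_n-\tau_i)\,\sigma(\tau_n-\tau_i)^2\le 1/\pi$ and $\alpha_i(1-\alpha_i)\le \tfrac14$, which is exactly where $\eta=\sqrt{1+4/\pi}$ comes from. The only cosmetic difference is that the paper establishes the pointwise integrand bound $2f(\tau_i)\,\sigma(\tau_n-\tau_i)\le \eta\, f(s)/\sqrt{\tau_n-s}$ on $[\tau_{i-1},\tau_i]$ and then integrates, whereas you evaluate $\int_{\tau_{i-1}}^{\tau_i} ds/\sqrt{\tau_n-s}$ in closed form and verify the equivalent scalar inequality $\sigma(a)(\sqrt{a+d}+\sqrt{a})\le\eta$.
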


\begin{proof} 
For $s\in[\tau_{i-1},\tau_{i}]$ we have $\tau_n \!-\! s\leq \tau_n\! - \!\tau_{i-1}=\tau_n \!-\! \tau_i+\alpha_i(1\!-\!\alpha_i)$. 
Since $\tau_n\! -\! \tau_i\leq \frac{1}{\pi}\,\sigma(\tau_n\!-\!\tau_i)^{-2}$ and  $\alpha_i(1\!-\!\alpha_i)\leq\frac{1}{4}\leq \frac{1}{4}\,\sigma(\tau_n\!-\!\tau_i)^{-2}$
it follows that
$\tau_n - s\leq (\mbox{$\frac{1}{\pi}+\frac{1}{4}$})\,\sigma(\tau_n\! -\! \tau_i)^{-2}$. This, combined with
the monotonicity of $f(\cdot)$, yields
$2f(\tau_i)\sigma(\tau_n\!-\!\tau_i)\leq \eta\, f(s)/\sqrt{\tau_n\!-\! s}$ so that \eqref{term} follows by integrating over the interval $[\tau_{i-1},\tau_i]$ and then summing for $i=1,\ldots,n$. 
\end{proof}

\begin{theorem}\label{thm:dos}
Assume $(\mbox{\sc h}_0)$. Suppose that $\tau_n\to\infty$ and $\norm{e_n}= O((1\!-\!\alpha_n)/\tau_n^a)$. \\[0.5ex]
{\em a)} If $\frac{1}{2}\le a<1$ then $\norm{x_n - Tx_n}= O(1/\tau_n^{a-1/2})$.\\[0.5ex]
{\em b)} If $a=1$ then $\norm{x_n - Tx_n}= O(\log \tau_n/\sqrt{\tau_n})$.\\[0.5ex]
{\em c)} If $a>1$ then $\norm{x_n - Tx_n}= O(1/\sqrt{\tau_n})$.
\end{theorem}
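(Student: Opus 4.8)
The plan is to combine the master bound of Theorem~\ref{thm:main} with the integral estimate of Lemma~\ref{lemma111}, and then to analyze the resulting integral asymptotically in each of the three regimes. From the hypothesis $\norm{e_n}=O((1\!-\!\alpha_n)/\tau_n^a)$ there is a constant $M$ with $\epsilon_n:=\norm{e_n}\le M(1\!-\!\alpha_n)/\tau_n^a$ for all large $n$. To apply Lemma~\ref{lemma111} I need a \emph{nonincreasing} $f$ on all of $[0,\infty)$ with $\epsilon_n\le(1\!-\!\alpha_n)f(\tau_n)$; the naive choice $f(s)=M/s^a$ is singular at $s=0$, so instead I would take $f(s)=c\,(1+s)^{-a}$ with $c=M(1+1/\tau_1)^a$. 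Since $\tau_n\ge\tau_1>0$ one checks $f(\tau_n)=c\,(1+\tau_n)^{-a}\ge M/\tau_n^a$, so the hypothesis of Lemma~\ref{lemma111} is satisfied.

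With this choice, Theorem~\ref{thm:main} and Lemma~\ref{lemma111} give
\begin{equation*}
\norm{x_n-Tx_n}\le \kappa\,\sigma(\tau_n)+\eta\int_0^{\tau_n}\frac{f(s)}{\sqrt{\tau_n-s}}\,ds+2\epsilon_{n+1}.
\end{equation*}
Here $\kappa\,\sigma(\tau_n)=O(\tau_n^{-1/2})$ and $\epsilon_{n+1}=O(\tau_n^{-a})$ (using $\tau_{n+1}\ge\tau_n$), so the residual is governed by the integral $I(\tau_n)=\int_0^{\tau_n}f(s)(\tau_n-s)^{-1/2}\,ds$ together with these two elementary terms. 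The heart of the proof is therefore to estimate $I(\tau_n)$.

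To estimate $I(\tau_n)$ I would split the interval at $\tau_n/2$. On $[0,\tau_n/2]$ one has $(\tau_n-s)^{-1/2}\le(\tau_n/2)^{-1/2}$, so this part is at most $(\tau_n/2)^{-1/2}\int_0^{\tau_n/2}f(s)\,ds$, and evaluating $\int_0^{\tau_n/2}c\,(1+s)^{-a}\,ds$ gives $O(\tau_n^{1-a})$ when $a<1$, $O(\log\tau_n)$ when $a=1$, and $O(1)$ when $a>1$. On $[\tau_n/2,\tau_n]$ one uses $f(s)\le f(\tau_n/2)=O(\tau_n^{-a})$ together with $\int_{\tau_n/2}^{\tau_n}(\tau_n-s)^{-1/2}\,ds=O(\tau_n^{1/2})$, which yields an $O(\tau_n^{1/2-a})$ contribution in every case. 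Collecting the two pieces gives $I(\tau_n)=O(\tau_n^{1/2-a})$ for $a<1$, $I(\tau_n)=O(\log\tau_n/\sqrt{\tau_n})$ for $a=1$, and $I(\tau_n)=O(\tau_n^{-1/2})$ for $a>1$. Equivalently, the substitution $s=\tau_n u$ identifies $I(\tau_n)$ with an incomplete Beta---or Gauss hypergeometric---integral having the same asymptotics, but the elementary splitting suffices for the desired $O$-rates.

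Finally I would compare the three terms. For $\tfrac12\le a<1$ the exponent $\tfrac12-a\in(-\tfrac12,0]$ exceeds both $-\tfrac12$ and $-a$, so $I(\tau_n)=O(\tau_n^{-(a-1/2)})$ dominates and proves (a). For $a=1$ the logarithmic term $O(\log\tau_n/\sqrt{\tau_n})$ dominates the $O(\tau_n^{-1/2})$ and $O(\tau_n^{-1})$ terms, proving (b). For $a>1$ all three contributions are $O(\tau_n^{-1/2})$, with $\epsilon_{n+1}$ negligible, proving (c). The main obstacle is the singularity of the natural weight $s^{-a}$ at the origin when $a\ge1$: this forces the regularized choice of $f$ above and makes the transition case $a=1$---where the logarithm emerges from $\int_0^{\tau_n/2}(1+s)^{-1}\,ds$---the delicate point of the argument.
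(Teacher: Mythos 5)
Your proposal is correct, and it follows the paper's skeleton up to the decisive step: both arguments reduce the middle term of \eqref{bound_inexact} via Lemma~\ref{lemma111} with the same regularized weight $f(s)=c\,(1+s)^{-a}$, dispose of the first and third terms identically, and so hinge entirely on the asymptotics of $I_a(t)=\int_0^t (1+s)^{-a}(t-s)^{-1/2}\,ds$. Where you genuinely diverge is in how that integral is estimated. The paper evaluates it in closed form as $\frac{2\sqrt{t}}{(t+1)^a}\,{}_2F_1(a,\frac12;\frac32;\frac{t}{t+1})$ and invokes the connection formula for the hypergeometric function near $z=1$ to obtain $I_a(t)\sim \frac{1}{a-1}\,t^{-1/2}+\frac{\sqrt{\pi}\,\Gamma(1-a)}{\Gamma(\frac32-a)}\,t^{1/2-a}$, which forces a separate patch when $a\ge 2$ is an integer (pole of $\Gamma(1-a)$) and an exact evaluation $I_1(t)=2\arcsinh\sqrt{t}/\sqrt{t+1}$ in the critical case. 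Your splitting of $[0,\tau_n]$ at $\tau_n/2$ --- bounding $(\tau_n-s)^{-1/2}$ by $(\tau_n/2)^{-1/2}$ on the left piece and $f(s)$ by $f(\tau_n/2)$ on the right --- produces the same $O$-rates in all three regimes by a purely elementary computation, uniformly in $a$ and with no exceptional cases; the logarithm at $a=1$ emerges transparently from $\int_0^{\tau_n/2}(1+s)^{-1}ds$. What you give up is the sharp leading constant that the hypergeometric route supplies for free; what you gain is a shorter, self-contained argument. One shared, harmless imprecision: the $O((1\!-\!\alpha_n)/\tau_n^a)$ hypothesis only guarantees $\epsilon_n\le(1\!-\!\alpha_n)f(\tau_n)$ for $n$ large, but the finitely many exceptional indices contribute at most a constant times $\sigma(\tau_n-\tau_{N_0})=O(\tau_n^{-1/2})$ to the sum in \eqref{bound_inexact}, so neither proof is affected.
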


\begin{proof} 
Let us consider the three terms in the bound \eqref{bound_inexact}. The first term is of order $\kappa\,\sigma(\tau_n)= O(1/\sqrt{\tau_n})$ while the third term
is $2\|e_{n+1}\|= O(1/\tau_{n+1}^a)$ so that for $a\geq 1/2$ it is also $O(1/\sqrt{\tau_n})$. 
To estimate the sum in the middle term we note that $\|e_n\|\leq (1-\alpha_n) f(\tau_n)$ with $f(s)=K/(s+1)^a$ for some constant 
$K\geq 0$. Hence, denoting $I_a(t)=\int_0^t\frac{1}{(s+1)^a\sqrt{t-s}}\,ds$ and using Lemma~\ref{lemma111} we get
$$\sum_{i=1}^{n}{2\alpha_i \norm{e_i}}\,{\sigma(\tau_n\!-\!\tau_i)} \leq \eta\, K\, I_a(\tau_n).$$
For $a=1$ we have $I_1(t)=\frac{2\arcsinh\sqrt{t}}{\sqrt{t+1}}= O({\log t}/{\sqrt{t}})$ which
yields {\em b)}. For $a\neq 1$ we may use the change of variables $s=t\,(1\!-\!x)$ to express $I_a(t)$ using the  Gauss hypergeometric function $_2F_1(a,b;c;z)$, namely
$$I_a(t)=\mbox{$\frac{\sqrt{t}}{(t+1)^a}$}\int_0^1\frac{1}{(1-\frac{t}{t+1} x)^a\sqrt{x}}\,dx=\mbox{$\frac{2\sqrt{t}}{(t+1)^a} \;_2F_{1}(a,\frac{1}{2};\frac{3}{2};\frac{t}{t+1})$}.$$
Now, for $z\sim 0$ we have $_2F_1(a,b;c;z)\sim 1$, while setting $d=a+b-c$ we have the following identity (see \cite[page 559, equation 15.3.6]{abs}) which is valid when
 $d$ is not an integer and $|\arg(z)|<\pi$
$$_2F_1(a,b;c;1\!-\!z)=z^{-d}\mbox{$\frac{\Gamma(c)\Gamma(d)}{\Gamma(a)\Gamma(b)} \,\! _2F_1(c\!-\!a,c\!-\!b;1\!-\!d;z)$}
+\mbox{$ \frac{\Gamma(c)\Gamma(-d)}{\Gamma(c-a)\Gamma(c-b)} \, _2F_1(a,b;d\!+\!1;z)$.}
$$
Taking $z=\frac{1}{t+1}$  in this identity with $b=\frac{1}{2}$ and $c=\frac{3}{2}$, it follows that for $t$ large 
$$I_a(t)\sim \mbox{$\frac{1}{a-1}\,\frac{1}{\sqrt{t}}$}+\mbox{$\frac{\sqrt{\pi}\,\Gamma(1-a)}{\Gamma(\frac{3}{2}-a)}\;\frac{1}{t^{a-1/2}}$.}$$ 
From this we deduce both {\em a)} and {\em c)}, except when $a\geq 2$ is an integer since in this case 
 $\Gamma(1\!-\!a)$ has a pole. However for $a\geq 2$ the rate $\norm{e_n}= O((1\!-\!\alpha_n)/\tau_n^a)$ is 
 stronger than the same condition with $a\in (1,2)$ so that we still get the conclusion
 $\norm{x_n - Tx_n}= O(1/\sqrt{\tau_n})$.
\end{proof}

\begin{corollary} Assume \eqref{h0}. Suppose that  $\alpha_n$ is bounded away from $0$ and $1$, and $\norm{e_n}= O(1/n^a)$.
\\[1ex]
{\em a)} If $\frac{1}{2}\leq a<1$ then $\norm{x_n - Tx_n}= O(1/n^{a-1/2})$.\\[1ex]
{\em b)} If $a=1$ then $\norm{x_n - Tx_n}= O(\log n/\sqrt{n})$.\\[1ex]
{\em c)} If $a>1$ then $\norm{x_n - Tx_n}= O(1/\sqrt{n})$.
\end{corollary}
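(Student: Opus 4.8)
The plan is to derive this Corollary directly from Theorem \ref{thm:dos}, the only work being to show that the assumption that $\alpha_n$ is bounded away from $0$ and $1$ forces $\tau_n$ to be comparable to $n$; once this is established, both the hypotheses and the three conclusions of Theorem \ref{thm:dos} translate verbatim from statements about $\tau_n$ into statements about $n$.

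First I would fix constants $\beta,\gamma>0$ with $\alpha_n(1\!-\!\alpha_n)\geq\beta$ and $1\!-\!\alpha_n\geq\gamma$ for every $n$; these exist precisely because $\alpha_n$ stays away from both endpoints. Summing the first inequality and combining it with the universal bound $\alpha_k(1\!-\!\alpha_k)\leq\frac14$ yields the two-sided estimate $\beta n\leq\tau_n\leq n/4$. In particular $\tau_n\to\infty$, so the first hypothesis of Theorem \ref{thm:dos} is satisfied.

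Next I would check the error condition $\norm{e_n}=O((1\!-\!\alpha_n)/\tau_n^a)$ demanded by Theorem \ref{thm:dos}. Starting from $\norm{e_n}\leq M/n^a$ and using $\tau_n\leq n/4$ together with $1\!-\!\alpha_n\geq\gamma$, one gets $(1\!-\!\alpha_n)/\tau_n^a\geq \gamma\,4^a/n^a$, so that $\norm{e_n}\leq \bigl(M/(\gamma 4^a)\bigr)(1\!-\!\alpha_n)/\tau_n^a$. Hence both hypotheses of Theorem \ref{thm:dos} hold, and its three cases give the bounds $O(1/\tau_n^{a-1/2})$, $O(\log\tau_n/\sqrt{\tau_n})$, and $O(1/\sqrt{\tau_n})$, respectively. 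Finally I would replace $\tau_n$ by $n$ using the lower bound $\tau_n\geq\beta n$ (and $\tau_n\leq n/4$ for the logarithm): in case a) this gives $1/\tau_n^{a-1/2}\leq(\beta n)^{-(a-1/2)}=O(1/n^{a-1/2})$; in case b), the estimates $\log\tau_n\leq\log n$ and $\sqrt{\tau_n}\geq\sqrt{\beta n}$ yield $O(\log n/\sqrt{n})$; and in case c), $1/\sqrt{\tau_n}\leq 1/\sqrt{\beta n}=O(1/\sqrt{n})$.

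I expect essentially no obstacle: the argument is pure bookkeeping built on the comparability $\beta n\leq\tau_n\leq n/4$. The only point meriting a word of care is the degenerate endpoint $a=\tfrac12$ in case a), where the exponent $a-\tfrac12$ vanishes and both $1/\tau_n^{a-1/2}$ and $1/n^{a-1/2}$ reduce to $O(1)$, so the comparability is trivially preserved.
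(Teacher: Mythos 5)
Your proposal is correct and follows exactly the paper's (very terse) proof: the paper simply notes that $\alpha_n$ being bounded away from $0$ and $1$ makes $\tau_n$ comparable to $n$ and invokes Theorem \ref{thm:dos}. Your write-up merely fills in the bookkeeping, and in fact does so slightly more carefully than the paper by noting that both the upper bound $\tau_n\le n/4$ (to verify the error hypothesis) and the lower bound $\tau_n\ge\beta n$ (to transfer the conclusions) are needed.
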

\begin{proof} Since $\alpha_n$ is far from 0 and 1 we have $\tau_n= O(n)$ and the result follows directly from Theorem \ref{thm:dos}. 
\end{proof}

\begin{remark}{\em
 For $\frac{1}{2}\leq a\leq 1$ the condition  $\norm{e_n}= O(1/n^a)$  is 
very mild and allows for nonsummable errors. 
However, this only implies a rate for $\|x_n-Tx_n\|$ and  not  the convergence of the iterates
which  in general  requires the errors to be summable (see Theorem \ref{thm:cgce} and Remark \ref{Rk1}).}
\end{remark}
Theorem \ref{thm:dos}  also gives rates of convergence for vanishing stepsizes of the form $\alpha_n=1/n^c$ with $c\leq 1$. 
We record the case $\alpha_n=1/n$ which is often used.

\begin{corollary} Assume \eqref{h0} and $\alpha_n=1/n$, and suppose that $\norm{e_n}= O(1/\log^a n)$.
\\[1ex]
{\em a)} If $\frac{1}{2}\leq a<1$ then $\norm{x_n - Tx_n}= O(1/\log^{a-1/2}n)$.\\[1ex]
{\em b)} If $a=1$ then $\norm{x_n - Tx_n}= O(\log \log n/\sqrt{\log n})$.\\[1ex]
{\em c)} If $a>1$ then $\norm{x_n - Tx_n}= O(1/\sqrt{\log n})$.
\end{corollary}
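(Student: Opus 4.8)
The plan is to reduce the statement to Theorem~\ref{thm:dos} exactly as in the preceding corollary, the only new ingredient being the asymptotic behaviour of $\tau_n$ for the choice $\alpha_n=1/n$. First I would compute
$$\tau_n=\sum_{k=1}^n\alpha_k(1\!-\!\alpha_k)=\sum_{k=1}^n\Big(\frac{1}{k}-\frac{1}{k^2}\Big)=H_n-\sum_{k=1}^n\frac{1}{k^2},$$
where $H_n=\sum_{k=1}^n 1/k$ is the $n$-th harmonic number. Since $H_n=\log n+\gamma+o(1)$ and $\sum_{k=1}^n 1/k^2$ converges to $\pi^2/6$, this gives $\tau_n=\log n+O(1)$; in particular $\tau_n\to\infty$ and $\tau_n/\log n\to 1$.

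Next I would translate the hypothesis $\norm{e_n}=O(1/\log^a n)$ into the form required by Theorem~\ref{thm:dos}, namely $\norm{e_n}=O((1\!-\!\alpha_n)/\tau_n^a)$. Since $1\!-\!\alpha_n=1-1/n$ is bounded away from $0$ for $n\ge 2$, and $\tau_n^a\sim\log^a n$ by the previous step, the two conditions are equivalent up to a multiplicative constant. Hence the errors satisfy the assumption of Theorem~\ref{thm:dos}.

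Finally, I would apply Theorem~\ref{thm:dos} and substitute the asymptotics of $\tau_n$ into each of its three conclusions. For $\tfrac{1}{2}\le a<1$ this yields $\norm{x_n-Tx_n}=O(1/\tau_n^{a-1/2})=O(1/\log^{a-1/2}n)$, giving a); for $a=1$ it gives $\norm{x_n-Tx_n}=O(\log\tau_n/\sqrt{\tau_n})$, and since $\log\tau_n=\log(\log n+O(1))\sim\log\log n$ while $\sqrt{\tau_n}\sim\sqrt{\log n}$, this is $O(\log\log n/\sqrt{\log n})$, giving b); and for $a>1$ it gives $O(1/\sqrt{\tau_n})=O(1/\sqrt{\log n})$, giving c).

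I do not expect any genuine obstacle, as the argument is entirely parallel to the corollary for $\alpha_n$ bounded away from $0$ and $1$, with the single replacement of $\tau_n=O(n)$ by $\tau_n\sim\log n$. The only point demanding a little care is confirming that the equivalence $\tau_n\sim\log n$ passes cleanly through the various $O(\cdot)$ expressions---in particular that $\log\tau_n\sim\log\log n$ in case b)---but this follows immediately from the estimate $\tau_n=\log n+O(1)$ established in the first step.
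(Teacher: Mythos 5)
Your argument is correct and is exactly the route the paper intends (the paper states this corollary without proof, as an immediate consequence of Theorem~\ref{thm:dos}): you verify $\tau_n=\log n+O(1)$, check that $\|e_n\|=O(1/\log^a n)$ is equivalent to $\|e_n\|=O((1\!-\!\alpha_n)/\tau_n^a)$ since $1-\alpha_n$ is bounded away from $0$, and substitute the asymptotics of $\tau_n$ into the three conclusions. Nothing is missing.
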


\section{Variants of the \eqref{IKM} iteration}\label{sec:4}

\subsection{Inexact projections}\label{sec:bddom}
Up to now we assumed $(\mbox{\sc h}_0)$ which requires that the iterates
$x_n$ remain in $C$. This is a nontrivial assumption that has to be checked independently.
Alternatively one might use the metric projection $P_C:X\to C$, namely $P_C(x)=\arg \min_{z\in C}\|x-z\|$, 
and consider the  iteration 
\begin{equation}
\label{KP} \tag{IKM$_p$}x_{n+1}=(1\!-\!\alpha_{n+1})x_{n}+\alpha_{n+1}(T\!\circ\! P_Cx_{n}+e_{n+1}).\end{equation}
As noted in \S\ref{sec:h0}, if $P_C$ is well defined and nonexpansive, which is the case when $X$ is a Hilbert space, the results
in the previous sections apply directly by considering the map $T\!\circ\! P_C$ instead of $T$.
However, in more general spaces the projection might not exist and even if it exists it might fail to be nonexpansive. 
On the other hand, even in a Hilbert setting the projection might be hard to compute. To overcome these difficulties 
one may consider to perform an inexact projection by choosing a sequence $\gamma_n\geq 0$ and  starting from $x_0\in C$ 
iterate as follows
\begin{equation}
\label{KMZ}\tag{IKM$_z$}
\left \{ \begin{aligned}
&\mbox{take $z_n\in C$ with $\|z_n-x_n\|\le d(x_n,C)+ \gamma_n$ and} \\
&\mbox{set  } x_{n+1}=(1\!-\!\alpha_{n+1})x_{n}+\alpha_{n+1}(Tz_{n}+e_{n+1}). 
\end{aligned} \right.
\end{equation}
In general, finding $z_n\in C$ as above requires a specific algorithm.
Simple cases where this can be done are when $C$ is a ball or the positive orthant in an $L^p$ space with $1\leq p\leq\infty$. In these cases 
the projection might fail to be nonexpansive and might even be nonunique.
\begin{theorem}\label{thm:main2} Let the sequence $(x_n,z_n)$ be given by \eqref{KMZ} with $\|e_n\|\!\to\! 0$ and
$\sum_{k\geq 1}(\alpha_k\|e_k\|\!+\!\gamma_k)\!<\!\infty$. Suppose that 
$\sum_{k\geq 1}\alpha_k(1\!-\!\alpha_k)\!=\!\infty$ and $\|x_0-Tz_n\|\leq\kappa$ for some $\kappa\geq 0$.
Then 
$\|x_n-z_n\|\to 0$ and $\|z_n-Tz_n\|\to 0$.
\end{theorem}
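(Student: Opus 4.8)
The plan is to reduce \eqref{KMZ} to the inexact iteration \eqref{IKM} by treating the projection discrepancy $\|z_n-x_n\|$ as an additional error, and then to run the same machinery (Corollary~\ref{cor:2}, Proposition~\ref{lemma1}, and the $\limsup$ argument of Theorem~\ref{thm:main}) on the sequence $(x_n)$. Throughout write $d_n=d(x_n,C)$ and $\rho_n=d_n+\gamma_n$, so that $\|z_n-x_n\|\le\rho_n$. The two claims are handled in order: first $\|x_n-z_n\|\to0$, which reduces to $d_n\to0$ since $\gamma_n\to0$; then $\|z_n-Tz_n\|\to0$, which I obtain from $\|x_n-Tz_n\|\to0$ together with $\|x_n-z_n\|\to0$.

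For the first claim I would control $d_n$ by a one-step contraction estimate. Since $z_n,Tz_n\in C$ and $C$ is convex, the point $c=(1-\alpha_{n+1})z_n+\alpha_{n+1}Tz_n$ lies in $C$, whence
\begin{equation*}
d_{n+1}\le\|x_{n+1}-c\|\le(1-\alpha_{n+1})\|x_n-z_n\|+\alpha_{n+1}\|e_{n+1}\|\le(1-\alpha_{n+1})(d_n+\gamma_n)+\alpha_{n+1}\|e_{n+1}\|.
\end{equation*}
Writing $\beta_n=(1-\alpha_{n+1})\gamma_n+\alpha_{n+1}\|e_{n+1}\|$, this reads $d_{n+1}\le(1-\alpha_{n+1})d_n+\beta_n$ with $\sum_n\beta_n<\infty$. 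Rearranging as $\alpha_{n+1}d_n\le d_n-d_{n+1}+\beta_n$ and summing telescopically gives $\sum_n\alpha_{n+1}d_n\le d_0+\sum_n\beta_n<\infty$. Since $\alpha_n(1-\alpha_n)\le\alpha_n$, the hypothesis $\sum_k\alpha_k(1-\alpha_k)=\infty$ forces $\sum_k\alpha_k=\infty$, so $\liminf_n d_n=0$; and since $D_n=d_n+\sum_{k\ge n}\beta_k$ is nonincreasing with $\sum_{k\ge n}\beta_k\to0$, the sequence $d_n$ converges. Hence $d_n\to0$ and $\|x_n-z_n\|\le\rho_n\to0$. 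This argument also yields the crucial summability $\sum_n\alpha_{n+1}d_n<\infty$, which I reuse below.

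For the second claim I would replay the derivation of Theorem~\ref{thm:main} with $y_n=Tz_n+e_{n+1}$ in place of $y_n=Tx_n+e_{n+1}$. Starting from $x_n-Tz_n=(x_n-x_{n+1})/\alpha_{n+1}+e_{n+1}$ as in \eqref{eq:basic}, I bound $\|x_m-x_n\|$ by repeating Corollary~\ref{cor:2}: the only change is that nonexpansiveness now gives, for $i\ge1$, $\|Tz_{i-1}-Tz_{j-1}\|\le\|z_{i-1}-z_{j-1}\|\le\|x_{i-1}-x_{j-1}\|+\rho_{i-1}+\rho_{j-1}$, while the terms with $i=0$ are still bounded by $\|x_0-Tz_{j-1}\|\le\kappa$. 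Setting $\hat\epsilon_i=\|e_i\|+\rho_{i-1}$ (with the conventions $e_0=0$, $\rho_{-1}=0$, so $\hat\epsilon_0=0$) absorbs both the evaluation error and the projection discrepancy, and the recursion \eqref{W} holds verbatim with $\epsilon$ replaced by $\hat\epsilon$. Proposition~\ref{lemma1} then gives $w_{n,n+1}/\alpha_{n+1}\le\kappa\,\sigma(\tau_n)+\sum_{i=1}^n2\alpha_i\hat\epsilon_i\,\sigma(\tau_n-\tau_i)+\hat\epsilon_{n+1}$, and hence the bound \eqref{bound_inexact} with $\hat\epsilon$ in place of $\epsilon$.

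It remains to verify that $\hat\epsilon$ satisfies the hypotheses of the $\limsup$ argument following \eqref{eq:rate1}. Here $\tau_n\to\infty$ by assumption; $\hat\epsilon_n=\|e_n\|+d_{n-1}+\gamma_{n-1}\to0$ because $\|e_n\|\to0$, $d_{n-1}\to0$, and $\gamma_{n-1}\to0$; and $\sum_n\alpha_n\hat\epsilon_n<\infty$ because $\sum_n\alpha_n\|e_n\|<\infty$, $\sum_n\gamma_{n-1}<\infty$, and—by the telescoping bound of the previous step—$\sum_n\alpha_n d_{n-1}<\infty$. The argument after \eqref{eq:rate1} then yields $\|x_n-Tz_n\|\to0$, and therefore $\|z_n-Tz_n\|\le\|z_n-x_n\|+\|x_n-Tz_n\|\to0$. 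The main obstacle I anticipate is bookkeeping in the third step: one must check that the shifted projection rewards $\rho_{i-1},\rho_{j-1}$ slot into the Markov reward process of \S\ref{Ap1} with exactly the index alignment that makes \eqref{W} reproduce itself, and that the combined error $\hat\epsilon$ retains both $\hat\epsilon_n\to0$ and the weighted summability—the latter relying on the nonobvious estimate $\sum_n\alpha_{n+1}d_n<\infty$ extracted from the distance recursion.
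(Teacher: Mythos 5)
Your proof is correct, and the main body follows the same route as the paper: rewrite \eqref{KMZ} as an \eqref{IKM}-type iteration with $y_n=Tz_n+e_{n+1}$, absorb the projection discrepancy into a combined error $\hat\epsilon_n=\|e_n\|+d(x_{n-1},C)+\gamma_{n-1}$ (the paper's $\epsilon_n=\|e_n\|+\delta_{n-1}$), feed this into Corollary~\ref{cor:2} and Proposition~\ref{lemma1}, and conclude via the $\limsup$ argument. Where you genuinely diverge is in the auxiliary step establishing $d(x_n,C)\to 0$ and $\sum_n\alpha_{n+1}d(x_n,C)<\infty$. The paper isolates this as Lemma~\ref{tech}: it iterates the recursion $\delta_n\le(1-\alpha_n)\delta_{n-1}+\alpha_n\xi_n$ through the products $\rho_n=\prod_{j\le n}(1-\alpha_j)$, invokes dominated convergence on a counting measure to get $\delta_n\to0$, and uses a probabilistic interpretation of $\alpha_k\prod_{j=i+1}^{k-1}(1-\alpha_j)$ to sum the series. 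You instead rearrange the same one-step inequality into $\alpha_{n+1}d_n\le d_n-d_{n+1}+\beta_n$ and telescope, which delivers the summability in one line, and then combine $\liminf d_n=0$ (from $\sum\alpha_k=\infty$) with the monotonicity of $d_n+\sum_{k\ge n}\beta_k$ to get convergence of $d_n$ to zero. This quasi-Fej\'er argument is more elementary and shorter than the paper's Lemma~\ref{tech}; the paper's version, on the other hand, produces the explicit bound $\delta_n\le\sum_{i=0}^n\alpha_i\xi_i\,\rho_n/\rho_i$, which it later reuses (see the remark after the corollary to Theorem~\ref{thm:main2}) to extract rates of convergence for $\|z_n-Tz_n\|$ — something your purely qualitative telescoping does not directly provide. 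All the bookkeeping you flagged as a possible obstacle (the index alignment of $\rho_{i-1},\rho_{j-1}$ in \eqref{W} and the conventions $\hat\epsilon_0=0$) checks out exactly as in the paper.
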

\begin{proof}  
Let us denote $\delta_n=d(x_n,C)+ \gamma_n$. Lemma \ref{tech} in Appendix \ref{Ap2} shows that $\delta_n$ tends to 0 so that $\|x_n-z_n\|\leq\delta_n\to 0$. On the other hand
\begin{equation}\label{bb1}
\|z_n-Tz_n\|\leq \|z_n-x_n\|+\|x_n-Tz_n\|
\leq  \delta_n + \frac{\|x_{n+1}-x_n\|}{\alpha_{n+1}}+\|e_{n+1}\|
\end{equation}   
so that it remains to show that $\frac{\|x_{n+1}-x_n\|}{\alpha_{n+1}}$ tends to 0. We proceed as
before by establishing a recursive bound $\|x_m-x_n\|\leq w_{m,n}$.
Taking $y_n=Tz_n+e_{n+1}$ with $y_{-1}=x_0$ and using Lemma~\ref{sym}  we get
$$	\|x_m-x_n\|\le \sum_{i=0}^m\sum_{j=m+1}^n\pi_i^m\pi_j^m\|y_{i-1}-y_{j-1}\|.$$
Set $w_{-1,n}=\kappa$ for all $n\in\NN$ and denote $\epsilon_n=\|e_{n}\|+\delta_{n-1}$ with $\epsilon_0=0$.
The terms with $i=0$ in the previous sum can be bounded as
$$\|y_{-1}-y_{j-1}\|\leq \|x_0-Tz_{j-1}\|+\|e_j\|\leq w_{-1,j-1}+\epsilon_0+\epsilon_j.$$
On the other hand, since $\|z_i-z_j\|\le \|x_i-x_j\| +\delta_i+\delta_j$, the nonexpansivity of $T$ implies that for $j>i\geq 1$  
$$\|y_{i-1}-y_{j-1}\|\le \|Tz_{i-1}-Tz_{j-1}\| +\|e_i\|+\|e_j\|\leq \|x_{i-1}-x_{j-1}\| +\epsilon_i+\epsilon_j.$$
Proceeding as in Corollary \ref{cor:2} we get $\|x_m-x_n\|\leq w_{m,n}$ with $w_{m,n}$ defined recursively by \eqref{W},
and then Proposition \ref{lemma1} yields
\begin{equation}\label{bb2}
\frac{\|x_{n+1}-x_n\|}{\alpha_{n+1}}\leq\frac{w_{n,n+1}}{\alpha_{n+1}}\leq {\kappa}\,{\sigma(\tau_n)}+\sum_{i=1}^n{2\alpha_i\epsilon_i}\,{\sigma(\tau_n\!-\!\tau_i)}+\epsilon_{n+1}.
\end{equation}
Since Lemma \ref{tech} shows that $\epsilon_n\to 0$ and $\sum_{k\geq 1}\alpha_k\epsilon_k<\infty$, by
arguing as in the proof of Theorem \ref{thm:main} we deduce that $\frac{\|x_{n+1}-x_n\|}{\alpha_{n+1}}$ converges to $0$ 
as claimed. \end{proof}

\begin{corollary} Under the same conditions of Theorem {\em \ref{thm:main2}} the following holds.
\\[1ex]
{\em a)} If $T(C)$ is relatively compact then $x_n$ converges strongly to a fixed point of $T$.\\[1ex]
{\em b)} If $x_n$ remains bounded  and $X$ is uniformly convex with Opial's property, then $x_n$ converges weakly to a fixed point of $T$. 

\end{corollary}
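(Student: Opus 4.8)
The plan is to follow the proof of Theorem \ref{thm:cgce}, the only genuinely new ingredient being a quasi-Fej\'er monotonicity estimate adapted to the projected iteration \eqref{KMZ}; once this is in hand, parts a) and b) are obtained as in that theorem. Concretely, I would first show that for every $x\in\fix(T)$ the limit $\ell(x)=\lim_{n\to\infty}\|x_n-x\|$ exists. Writing $\delta_n=d(x_n,C)+\gamma_n$ and $\epsilon_n=\|e_n\|+\delta_{n-1}$ as in the proof of Theorem \ref{thm:main2}, and using $\|Tz_n-x\|=\|Tz_n-Tx\|\le\|z_n-x\|\le\|x_n-x\|+\delta_n$, the definition of \eqref{KMZ} gives
$$\|x_{n+1}-x\|\le(1\!-\!\alpha_{n+1})\|x_n-x\|+\alpha_{n+1}(\|x_n-x\|+\delta_n)+\alpha_{n+1}\|e_{n+1}\|=\|x_n-x\|+\alpha_{n+1}\epsilon_{n+1}.$$
Since Lemma \ref{tech} supplies $\sum_{k\ge1}\alpha_k\epsilon_k<\infty$, the sequence $\|x_n-x\|+\sum_{k>n}\alpha_k\epsilon_k$ is nonincreasing, hence convergent, and as its tail vanishes the limit $\ell(x)$ is well defined.

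For part a), the relative compactness of $T(C)$ together with $\|z_n-Tz_n\|\to0$ (Theorem \ref{thm:main2}) lets me extract a subsequence with $Tz_{n_k}\to x$; then $z_{n_k}\to x$, and since $C$ is closed and $T$ is continuous we obtain $x\in C$ and $Tx=x$, so $\fix(T)\neq\emptyset$. Because $\|x_n-z_n\|\to0$ we also have $x_{n_k}\to x$, whence $\ell(x)=0$; as $\ell(x)=\lim_n\|x_n-x\|$ exists, the whole sequence $x_n$ (and therefore $z_n$) converges strongly to $x$.

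For part b), uniform convexity yields reflexivity, so from the boundedness of $x_n$ I extract $x_{n_k}\rightharpoonup x$; since $\|x_n-z_n\|\to0$ this gives $z_{n_k}\rightharpoonup x\in C$, and demiclosedness of $I-T$ combined with $\|z_n-Tz_n\|\to0$ forces $x\in\fix(T)$. The uniqueness of the weak cluster point then follows verbatim from the Opial argument in Theorem \ref{thm:cgce}: any two distinct weak limits $x\neq y$ are both fixed points, so $\ell(x)$ and $\ell(y)$ exist, and Opial's inequalities applied along the two subsequences give $\ell(x)<\ell(y)$ and $\ell(y)<\ell(x)$, a contradiction; hence $x_n\rightharpoonup x$.

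The main obstacle is the first step: the approximate projection introduces the extra defect $\delta_n$ in the Fej\'er estimate, so the telescoping closes only because $\sum_n\alpha_{n+1}\epsilon_{n+1}<\infty$, which is precisely the content of Lemma \ref{tech}. Once $\ell(x)$ is secured, the remaining reasoning is a routine transcription of Theorem \ref{thm:cgce}, with $z_n$ playing the role of the argument of $T$ and $\|x_n-z_n\|\to0$ bridging the iterates and their projected counterparts.
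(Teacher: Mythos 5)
Your proposal is correct and follows essentially the same route as the paper: the quasi-Fej\'er estimate $\|x_{n+1}-x\|\le\|x_n-x\|+\alpha_{n+1}\epsilon_{n+1}$ with $\epsilon_n=\|e_n\|+\delta_{n-1}$, summability via Lemma \ref{tech} combined with $\sum_k\alpha_k\|e_k\|<\infty$, and then the transcription of Theorem \ref{thm:cgce} using $\|z_n-Tz_n\|\to 0$ and $\|x_n-z_n\|\to 0$ to bridge the two sequences. This is exactly the paper's argument, spelled out in slightly more detail.
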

\begin{proof} 
Let $\epsilon_n=\|e_{n}\|+\delta_{n-1}$ as in the previous proof so that $\sum_{k\geq 1}\alpha_k\epsilon_k<\infty$. 
For each $x\in\fix(T)$ a simple computation yields $\|x_{n}-x\|\le\|x_{n-1}-x\|+\alpha_{n}\epsilon_n$
so that the sequence $\|x_{n}-x\|+\sum_{k>n}\alpha_k\epsilon_k$ decreases with $n$, and then 
$\|x_{n}-x\|$ converges. Since $\|z_n-x_n\|\to 0$ it follows that
$\|z_{n}-x\|$ converges as well. Then, since $\|z_n-Tz_n\|\to 0$, we may argue as in the proof of Theorem \ref{thm:cgce}
to get the strong/weak convergence of $z_n$, and hence the corresponding convergence of $x_n$.
\end{proof}

\begin{remark} {\em  The bound for $\delta_n$ in Lemma \ref{tech}, together with \eqref{bb1} and \eqref{bb2},
provide an explicit estimate for $\|z_n-Tz_n\|$ from which one can study its rate of convergence using similar 
techniques as in Section \S\ref{sec:rates}}.
\end{remark}

\subsection{Ishikawa iteration}\label{sec:ishikawa}
In \cite{Ishikawa74} Ishikawa proposed an alternative method to approximate a fixed point of 
a nonexpansive $T:C\to C$. Namely, given two
sequences  $\alpha_n,\beta_n\in (0,1)$ and starting from $x_0\in C$, the Ishikawa process generates a sequence 
by the following two-stage iteration
\begin{equation}\label{IK}\tag{I}
\left\{\begin{aligned}y_{n}&=(1\!-\!\beta_{n+1})x_{n}+\beta_{n+1}Tx_{n}\\
x_{n+1}&=(1\!-\!\alpha_{n+1})x_{n}+\alpha_{n+1}Ty_{n}\end{aligned}\right.
\end{equation}
In this subsection we assume that $C$ is bounded and we denote $\kappa=\diam(C)$.
\begin{corollary}\label{thm:ishi}Let the sequence $(x_n)$ be given by the iteration \eqref{IK} with $\beta_n\to 0$ and $\sum_{k\geq 1}\alpha_k\beta_k<\infty$,
and assume  that $\sum_{k\geq 1}\alpha_k(1\!-\!\alpha_k)=\infty$. Then
$\|x_n-Tx_n\| \to 0$ and the following estimate holds
	 \begin{equation}\label{ishi1}
		\|x_n-Tx_n\|\le \kappa\left[{\sigma(\tau_n)}+\sum_{i=1}^n {\alpha_i\beta_i}\,{\sigma(\tau_n\!-\!\tau_i)}+2 \beta_{n+1}\right].
		\end{equation}
\end{corollary}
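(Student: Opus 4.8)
The plan is to recognize that the Ishikawa iteration \eqref{IK} is precisely an instance of the inexact scheme \eqref{IKM} and then apply Theorem \ref{thm:main} directly. The key observation is that $Ty_n = Tx_n + (Ty_n - Tx_n)$, so if I set $e_{n+1} = Ty_n - Tx_n$, the update $x_{n+1}=(1\!-\!\alpha_{n+1})x_{n}+\alpha_{n+1}Ty_{n}$ takes exactly the form of \eqref{IKM}. Thus the first step is to identify this error term and bound it. By nonexpansivity of $T$ and the definition of $y_n$,
\[
\|e_{n+1}\| = \|Ty_n-Tx_n\|\le\|y_n-x_n\|=\beta_{n+1}\|Tx_n-x_n\|\le\beta_{n+1}\,\kappa,
\]
where the last inequality uses that $x_n,Tx_n\in C$ and $\kappa=\diam(C)$. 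This gives the clean error bound $\epsilon_{n+1}=\beta_{n+1}\kappa\ge\|e_{n+1}\|$.

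Next I would verify the hypotheses needed to invoke Theorem \ref{thm:main}. Assumption $(\mbox{\sc h}_0)$ holds immediately: since $T:C\to C$ and $C$ is bounded, the iterates stay in $C$ and $\|Tx_n-x_0\|\le\diam(C)=\kappa$. For the error bound I substitute $\epsilon_i=\beta_i\kappa$ into \eqref{bound_inexact}, obtaining
\[
\|x_n-Tx_n\| \leq \kappa\,\sigma(\tau_n) + \sum_{i=1}^{n}2\alpha_i(\beta_i\kappa)\,\sigma(\tau_n\!-\!\tau_i) + 2\beta_{n+1}\kappa,
\]
which is exactly \eqref{ishi1} after factoring out $\kappa$. (A small bookkeeping point: the index on $\beta$ in the error is $\epsilon_{i}=\beta_i\kappa$ since $e_{i}=Ty_{i-1}-Tx_{i-1}$ and $\|y_{i-1}-x_{i-1}\|=\beta_{i}\|Tx_{i-1}-x_{i-1}\|$; one must check this shift is consistent with the convention $e_0=0$ used in Theorem \ref{thm:main}.)

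Finally, to conclude $\|x_n-Tx_n\|\to 0$ I would verify the hypotheses of the convergence clause of Theorem \ref{thm:main}, namely $\tau_n\to\infty$, $\epsilon_n\to 0$, and $\sum_{k\ge 1}\alpha_k\epsilon_k<\infty$. The first is assumed directly ($\sum\alpha_k(1\!-\!\alpha_k)=\infty$), the second follows from $\beta_n\to 0$ since $\epsilon_n=\beta_n\kappa$, and the third reduces to $\sum_{k\ge 1}\alpha_k\beta_k\kappa<\infty$, which is exactly the assumption $\sum_{k\ge 1}\alpha_k\beta_k<\infty$. Hence all three conditions hold and the desired convergence follows.

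I expect the proof to be essentially routine given Theorem \ref{thm:main}; the only subtle point, and the part most worth getting right, is the \emph{identification} of the error term and the verification that the index conventions (the off-by-one in $\beta_i$ versus $\epsilon_i$, and the initialization $e_0=0$, $y_{-1}=x_0$) match those embedded in the statement of Theorem \ref{thm:main}. There is no genuine obstacle here—the work was already done in proving the main theorem—so the task is simply to recast Ishikawa as a perturbed Krasnosel'skii–Mann step with a controllable perturbation.
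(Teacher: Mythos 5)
Your proof is correct and follows exactly the paper's own argument: rewrite the Ishikawa step as an \eqref{IKM} iteration with $e_{n+1}=Ty_n-Tx_n$, bound $\|e_{n+1}\|\le\|y_n-x_n\|=\beta_{n+1}\|x_n-Tx_n\|\le\kappa\beta_{n+1}$ by nonexpansivity, check \eqref{h0} via $T:C\to C$ with $C$ bounded, and invoke Theorem \ref{thm:main}. The one caveat is that substituting $\epsilon_i=\kappa\beta_i$ into \eqref{bound_inexact} produces $2\alpha_i\beta_i$ in the middle sum rather than the $\alpha_i\beta_i$ displayed in \eqref{ishi1}, so your claim of an ``exact'' match is off by that factor of $2$ --- this appears to be a typo in the corollary's stated estimate rather than a flaw in your argument, since the paper's own one-line proof (``the result follows directly from Theorem \ref{thm:main}'') yields the same factor.
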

\begin{proof} 
We observe that \eqref{IK} can be written as an \eqref{IKM} iteration with errors given by $e_{n+1}= Ty_n-Tx_n$. 
Since $Ty_n\in C$ the iterates $x_n$ remain in $C$ while by nonexpansivity we have 
$$\|e_{n+1}\|\le \|y_n-x_n\|=\beta_{n+1}\|x_n-Tx_n\|\le \kappa\, \beta_{n+1}$$
so the result follows directly from Theorem~\ref{thm:main}.
\end{proof}

\begin{remark} {\em
Ishikawa proved in \cite{Ishikawa74} that if $C$ is a convex compact subset of a Hilbert space $X$,  
the iteration \eqref{IK} converges strongly to a fixed point as soon as $0\!\le\!\alpha_n\!\leq\!\beta_n\!\leq\! 1$ 
with $\beta_n\to 0$ and $\sum_{k\geq 1}\alpha_k\beta_k=\infty$. 
Interestingly,  Corollary \ref{thm:ishi} together with Theorem \ref{thm:cgce} implies  the 
convergence when $\sum_{k\geq 1}\alpha_k\beta_k<\infty$ which is complementary to Ishikawa's condition.
Note also that we do not require $\alpha_n\le\beta_n$.
On the other hand, Ishikawa's theorem holds for the larger class of Lipschitzian pseudo-contractive 
maps, whereas our result is restricted to nonexpansive maps but 
is valid in more general spaces and it yields the rate of convergence of the fixed-point residual as in 
\S\ref{sec:rates}.}
\end{remark}

\subsection{Diagonal KM iteration}\label{sec:diagonal}
Let $T_n:C\to C$ be a sequence of nonexpansive maps converging
uniformly to $T$ so that $\rho_n=\sup\{\|T_nx-Tx\|:x\in C\}$ tends to 0.
Starting from  $x_0\in C$ consider the diagonal iteration 
\begin{equation}
\label{KMu}\tag{\sc dkm}x_{n+1}=(1-\alpha_{n+1})x_{n}+\alpha_{n+1}T_{n+1}x_{n}.
\end{equation} 

\begin{corollary}\label{diag:1}Let $x_n$ be a sequence generated by \eqref{KMu} with $\rho_n\to 0$ and 
$\sum_{k\geq 1}\alpha_k\rho_{k}<\infty$. Suppose that 
$\sum_{k\geq 1}\alpha_k(1\!-\!\alpha_k)\!=\!\infty$ and $\|x_0-Tx_n\|\leq\kappa$ for some $\kappa\geq 0$.
Then $\|x_n-Tx_n\|\to 0$ and the following estimate holds
 \[
\norm{x_n - Tx_n}\leq {\kappa}\,{\sigma(\tau_n)}+ \sum_{i=1}^{n} {2\alpha_i \rho_i}\,{\sigma(\tau_n\!-\!\tau_i)} + 2\rho_{n+1}.
 \]
\end{corollary}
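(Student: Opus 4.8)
The plan is to recognize the diagonal iteration \eqref{KMu} as a particular instance of the inexact scheme \eqref{IKM}, in the same spirit as the treatment of the Ishikawa process in Corollary~\ref{thm:ishi}, and then to read off the conclusion from Theorem~\ref{thm:main}. First I would introduce the error term $e_{n+1}=T_{n+1}x_n-Tx_n$, so that \eqref{KMu} becomes
\[
x_{n+1}=(1-\alpha_{n+1})x_n+\alpha_{n+1}(Tx_n+e_{n+1}),
\]
which is exactly \eqref{IKM} for the limiting map $T$. Since $x_n\in C$ by construction (each $x_{n+1}$ is a convex combination of $x_n\in C$ and $T_{n+1}x_n\in C$, starting from $x_0\in C$), the definition of $\rho_{n+1}$ gives the error bound $\|e_{n+1}\|=\|T_{n+1}x_n-Tx_n\|\le\rho_{n+1}$; equivalently $\|e_n\|\le\rho_n$, so that $\epsilon_n:=\rho_n$ is an admissible bound for the errors.

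Next I would verify that assumption $(\mbox{\sc h}_0)$ holds. The first requirement, $x_n\in C$, was just checked, while the second, $\|Tx_n-x_0\|\le\kappa$, is precisely the hypothesis imposed in the statement. Hence Theorem~\ref{thm:main} applies with $\epsilon_i=\rho_i$ and yields
\[
\|x_n-Tx_n\|\le\kappa\,\sigma(\tau_n)+\sum_{i=1}^n 2\alpha_i\rho_i\,\sigma(\tau_n\!-\!\tau_i)+2\rho_{n+1},
\]
which is exactly the claimed estimate.

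Finally, for the convergence statement I would invoke the ``in particular'' clause of Theorem~\ref{thm:main}: the hypothesis $\sum_{k\ge1}\alpha_k(1\!-\!\alpha_k)=\infty$ means $\tau_n\to\infty$, while $\rho_n\to0$ and $\sum_{k\ge1}\alpha_k\rho_k<\infty$ supply $\epsilon_n\to0$ and $\sum_{k\ge1}\alpha_k\epsilon_k<\infty$. Therefore $\|x_n-Tx_n\|\to0$. The argument is essentially a bookkeeping reduction, so there is no real obstacle; the only points requiring a word of care are confirming that the iterates never leave $C$ (so that the uniform bound $\rho_n$ genuinely controls the error) and noting that, unlike in Corollary~\ref{thm:ishi} where $C$ is bounded, here the boundedness half of $(\mbox{\sc h}_0)$ is not automatic and must be supplied as the hypothesis $\|x_0-Tx_n\|\le\kappa$.
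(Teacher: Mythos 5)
Your proposal is correct and follows exactly the paper's own argument: rewrite \eqref{KMu} as \eqref{IKM} with $e_{n+1}=T_{n+1}x_n-Tx_n$, observe $x_n\in C$ and $\|e_n\|\le\rho_n$, and apply Theorem~\ref{thm:main} with $\epsilon_n=\rho_n$. Your additional remarks on verifying $(\mbox{\sc h}_0)$ and the convergence clause are just a more explicit spelling-out of the same reduction.
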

\begin{proof} 
Note that \eqref{KMu} corresponds to an \eqref{IKM} iteration with errors given by $e_{n+1}=T_{n+1}x_{n}-Tx_{n}$. 
Since $T_nx_n\in C$ the iterates remain in $C$, and $\|e_n\|\leq\rho_n$.
Hence the result follows again from Theorem~\ref{thm:main}.
\end{proof}

\begin{remark} {\em
The diagonal iteration \eqref{KMu} was introduced in~\cite[Zhao and Yang]{Zhao} in order to compute a solution for the split feasibility problem in Hilbert spaces. Weak convergence of \eqref{KMu} was established in \cite[Xu]{XU06}  for uniformly convex spaces with a differentiable norm, under the same assumptions of Corollary~\ref{diag:1}. Our result shows that this also holds for uniformly convex spaces with Opial's property, and moreover it yields rates of convergence for the residuals  in the same way as in \S\ref{sec:rates}.}
\end{remark}

\section{Application to nonautonomous evolution equations}\label{sec:continuous}

Let $T:X\to X$ be a nonexpansive map and $f:[0,\infty)\to X$ a  continuous function. 
Let $u:[0,\infty)\to X$ be the unique solution of the evolution equation
\begin{equation}\label{E}\tag{E}\begin{cases}
		u'(t)+(I-T)u(t)=f(t),&\\
		u(0)=x_0.&\\\end{cases}
\end{equation}
In the autonomous case with $f(t)\equiv 0$, Baillon and Bruck \cite{bb96}
 used the Krasnosel'skii-Mann iteration to prove that $\|u'(t)\|=O(1/\sqrt{t})$,
 assuming that $T:C\to C$ with $C$ a bounded closed convex domain. 
In the nonautonomous case $u(t)$ could leave the domain $C$
so we assume that $T$ is defined on the whole space.

In order to deal with the unboundedness of the domain, and inspired from Proposition \ref{p123}, 
we consider a continuous scalar function $\epsilon(t)\geq \|f(t)\|$ and we assume one of the following alternative 
conditions 
$$
\begin{array}{ll}
(\mbox{\sc h}_2')&\mbox{\em $T$ has a bounded range, and then we let $\kappa=\sup\{\|Tx-x_0\|:x\in X\}$, or}\\[1ex]
(\mbox{\sc h}''_2)& \mbox{\em $\fix(T)\neq\emptyset$ and $\epsilon(t)$ is decreasing with $S=\int_0^\infty \epsilon(t)\,dt<\infty$ ,
and  we let } 
 \mbox{\em $\kappa=2\mathop{\rm dist}(x_0,\fix(T))+S$.}
\end{array}
$$
Under either one of these conditions we have the following analog of Theorem \ref{thm:main}.

\begin{theorem} Let $u(t)$ be the solution of \eqref{E} and assume $(\mbox{\sc h}_2')$ or $(\mbox{\sc h}_2'')$.  Then 
	\begin{equation}\label{contrate}\|u'(t)\|\le {\kappa}\,{\sigma(t)}+\int_0^t{2\,\epsilon(s)}{\sigma(t\!-\!s)}\,ds+\epsilon(t).
	\end{equation}
Moreover, if $\epsilon(t)\to 0$ and $\int_0^\infty \epsilon(s)\,ds<\infty$ then $\|u'(t)\|\to 0$ as $t\to\infty$.
	\end{theorem}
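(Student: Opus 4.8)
The plan is to obtain \eqref{contrate} as the limit of the discrete estimate \eqref{eq:estimate1} along an explicit Euler discretization of the evolution equation, mirroring exactly the way \eqref{bound_inexact} was derived for \eqref{IKM}. Writing the equation as $u'(t) = Tu(t) - u(t) + f(t)$, a uniform Euler scheme on $[0,t]$ with $N$ steps of size $h = t/N$ reads
\[
x_{k+1} = x_k + h\big(Tx_k - x_k + f(t_k)\big) = (1-h)x_k + h\big(Tx_k + f(t_k)\big), \qquad t_k = kh,
\]
started from $x_0 = u(0)$. This is precisely an \eqref{IKM} iteration with constant step $\alpha_{k+1} = h$ and error $e_{k+1} = f(t_k)$, so that $\epsilon_{k+1} = \epsilon(t_k) \ge \|e_{k+1}\|$ is a valid error bound. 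Since $\alpha_k \equiv h$ one has $\tau_{N-1} = (N-1)h(1-h) = (t-h)(1-h) \to t$, which will produce the continuous time $t$ in the limit.

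First I would check that $(\mbox{\sc h}_0)$ holds for this discrete iteration with a constant $\kappa_h$ satisfying $\limsup_{h\to 0}\kappa_h \le \kappa$. Under $(\mbox{\sc h}_2')$ this is immediate with $\kappa_h = \kappa = \sup_x\|Tx - x_0\|$. Under $(\mbox{\sc h}_2'')$ I would reuse the argument of Proposition \ref{p123}(b): the relevant quantity is $S_h = \sum_{k\ge 1}\alpha_k\|e_k\| \le \sum_{k\ge 1} h\,\epsilon(t_{k-1})$, and since $\epsilon$ is decreasing this is bounded by $h\,\epsilon(0) + \int_0^\infty\epsilon(s)\,ds = h\,\epsilon(0) + S$. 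Hence $(\mbox{\sc h}_0)$ holds with $\kappa_h = 2\,\mathrm{dist}(x_0,\fix(T)) + S_h \le \kappa + h\,\epsilon(0)$, so that $\kappa_h \to \kappa$.

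With $(\mbox{\sc h}_0)$ in force, Corollary \ref{cor:2} together with Proposition \ref{lemma1} (applied with $n = N-1$) gives
\[
\frac{\|x_N - x_{N-1}\|}{h} \le \frac{w_{N-1,N}}{h} \le \kappa_h\,\sigma(\tau_{N-1}) + \sum_{i=1}^{N-1} 2h\,\epsilon_i\,\sigma(\tau_{N-1}-\tau_i) + \epsilon_N,
\]
and the core of the argument is to let $N\to\infty$ termwise. The left-hand side equals $\|Tx_{N-1} - x_{N-1} + f(t_{N-1})\|$, so once the scheme is known to converge, $x_{N-1}\to u(t)$, and continuity of $T$ and $f$ yields $\frac{x_N - x_{N-1}}{h} \to Tu(t) - u(t) + f(t) = u'(t)$. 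On the right, $\kappa_h\,\sigma(\tau_{N-1}) \to \kappa\,\sigma(t)$ and $\epsilon_N = \epsilon(t-h) \to \epsilon(t)$ by continuity, while the middle term is a Riemann sum: since $\tau_{N-1}-\tau_i \to t - s$ as the grid refines and both $\epsilon$ and the bounded continuous function $\sigma$ are uniformly continuous on $[0,t]$, it converges to $\int_0^t 2\,\epsilon(s)\,\sigma(t-s)\,ds$. Combining these limits yields \eqref{contrate}.

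The main obstacle is the convergence of the discretization, namely $\max_{0\le k\le N}\|x_k - u(t_k)\| \to 0$, which is what licenses identifying the limit of the left-hand side with $\|u'(t)\|$. This is the classical convergence of explicit Euler for the ODE $u' = G(u,t)$ with $G(x,s) = Tx - x + f(s)$; here $G$ is $2$-Lipschitz in $x$ (since $\|(I-T)x - (I-T)y\| \le 2\|x-y\|$) and continuous in $(x,s)$, so a standard discrete Gronwall estimate gives uniform convergence on $[0,t]$, and also the $C^1$ regularity of $u$ used above. Finally, the ``moreover'' statement follows directly from \eqref{contrate} by the splitting used in the proof of Theorem \ref{thm:main}: fixing $t_0$, bounding $\sigma(t-s)\le 1$ on $[t_0,t]$ and using $\sigma(t-s)\to 0$ on $[0,t_0]$ gives $\limsup_{t\to\infty}\|u'(t)\| \le \int_{t_0}^\infty 2\,\epsilon(s)\,ds$, and letting $t_0\to\infty$ together with $\kappa\,\sigma(t)\to 0$ and $\epsilon(t)\to 0$ completes the proof.
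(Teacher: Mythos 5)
Your proposal is correct and follows essentially the same route as the paper: discretize \eqref{E} by explicit Euler with constant step $t/N$, recognize the scheme as an \eqref{IKM} iteration with errors $f(t_k)$, verify \eqref{h0} via Proposition \ref{p123}, apply Proposition \ref{lemma1}, and pass to the limit with a Riemann-sum argument. The only (harmless) deviations are that you track a step-dependent constant $\kappa_h\to\kappa$ and prove the final claim $\|u'(t)\|\to 0$ by splitting the integral at a fixed $t_0$ rather than by dominated convergence as the paper does.
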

\begin{proof}Fix $t>0$ and set $\lambda^n=\frac{t}{n}$. Let us consider the sequence $(x_k^n)_{k\geq 0}$ defined by $x_0^{n}=x_0$ and \begin{equation*}\frac{x_{k+1}^n-x_k^n}{\lambda^n}=-(I-T)x_k^n+f((k\!+\!1)\lambda^n).\end{equation*}
It is well known that $x_n^n\to u(t)$ and $(x_{n+1}^n-x_{n}^n)/\lambda^n\to u'(t)$ as $n\to\infty$. 
On the other hand, $x_k^n$ corresponds to the $k$-th term of an \eqref{IKM} iteration with errors $e_k^n=f(k\lambda^n)$ and constant stepsizes $\alpha_k\equiv\lambda^n$.
We claim that \eqref{h0} holds with $\kappa$ defined as in $(\mbox{\sc h}_2')$ or $(\mbox{\sc h}_2'')$.
Indeed, in the case $(\mbox{\sc h}_2')$ this follows directly from Proposition \ref{p123}\,a),
whereas in the case $(\mbox{\sc h}_2'')$ it follows from Proposition \ref{p123}\,b) and the estimate
$$\sum_{k=1}^\infty\lambda^n\|e_k^n\|\leq \sum_{k=1}^\infty\lambda^n\epsilon(k\lambda^n)\leq\int_0^\infty\!\!\!\epsilon(s)\,ds=S.$$
Hence, letting $\tau^n_k=\sum_{i=1}^k\alpha_i(1\!-\!\alpha_i)$ and invoking Proposition~\ref{lemma1} we get
\begin{equation}\label{riemann}
\dfrac{\|x_{n+1}^n-x_{n}^n\|}{\lambda^n}\le \kappa \,\sigma(\tau^n_{n})+2\displaystyle\sum_{i=1}^{n}\mbox{$\frac{t}{n}$}\,\epsilon(i\mbox{$\frac{t}{n}$})\,\sigma(\tau^n_n\!-\!\tau^n_i)+\epsilon((n\!+\!1)\mbox{$\frac{t}{n}$}).
\end{equation}
Since $\tau_n^n=t(1\!-\!\frac{t}{n})\to t$ as $n\to\infty$ the first term
$\kappa\,\sigma(\tau^n_{n})$ converges to $\kappa\,\sigma(t)$,
while for the third term we have $\epsilon((n\!+\!1)\mbox{$\frac{t}{n}$})\to\epsilon(t)$.
Also $\tau_n^n\!-\!\tau_i^n=(1\!-\!\frac{t}{n})(t\!-\!i\frac{t}{n})$
so that the middle term is a Riemann sum for the function
$h_n(s)=2\epsilon(s)\sigma((1\!-\!\frac{t}{n})(t\!-\!s))$. Since
$h_n(s)$ converges uniformly for $s\in [0,t]$ towards $h(s)=2\epsilon(s)\sigma(t\!-\!s)$,
this Riemann sum converges as $n\to\infty$ to the integral $\int_0^t 2\epsilon(s)\sigma(t\!-\!s)ds$. Therefore
 by letting $n\to\infty$ in \eqref{riemann} we obtain  \eqref{contrate}.

To prove the last claim $\|u'(t)\|\to 0$ we note that $\sigma(t)\to 0$ for $t\to\infty$ while $\epsilon(t)\to 0$ by assumption, 
so that it suffices to prove that $\int_0^t 2\epsilon(s)\sigma(t\!-\!s)ds$ tends to 0 as $t\to\infty$.
Denoting $h_t(s)=2\epsilon(s)\sigma(t\!-\!s)\mathbbm{1}_{[0,t]}(s)$ this integral is exactly $\int_{\RR}h_t(s)ds$.
Now, the definition of $\sigma(\cdot)$ implies $h_t(s)\to 0$ pointwise as $t\to\infty$,
and since $h_t(s)\leq 2\epsilon(s)$, the conclusion follows from Lebesgue's dominated 
convergence theorem.
\end{proof}

Clearly, from \eqref{contrate} we also get
\begin{equation*}\|u(t)-Tu(t)\|\le {\kappa}\,{\sigma(t)}+\int_0^t{2\,\epsilon(s)}{\sigma(t\!-\!s)}\,ds+2\epsilon(t)
	\end{equation*}
so that  $\|u(t)-Tu(t)\|\to 0$ as soon as $\epsilon(t)\to 0$ and $\int_0^\infty \epsilon(s)\,ds<\infty$. 
As in the discrete setting, from this one can deduce that $\fix(T)\neq\emptyset$
as well as the convergence of $u(t)$ to a fixed point of $T$. 
 \begin{theorem} Let $u(t)$ be the solution of  \eqref{E}. Suppose that $\int_{0}^\infty \epsilon(s)\, ds<\infty$
 and   $\|u(t)-Tu(t)\|\to 0$.
\\[1ex]
{\em a)} If $T(C)$ is relatively compact then $u(t)$ converges strongly to a fixed point of $T$.\\[1ex]
{\em b)} If $X$ is uniformly convex and $u(t)$ remains bounded then $\fix(T)\neq\emptyset$. 
Moreover, if $X$ satisfies Opial's property then $u(t)$ converges weakly to a fixed point of $T$. 
  \end{theorem}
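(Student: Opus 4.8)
This final theorem is the continuous-time analog of Theorem \ref{thm:cgce}, and since we are explicitly told the arguments are standard, my strategy is to transport the discrete proof of Theorem \ref{thm:cgce} line by line into the evolution-equation setting, replacing the discrete quasi-Fej\'er monotonicity inequality \eqref{eq:est} with its differential counterpart. The key preliminary observation is that for any fixed point $x\in\fix(T)$, the scalar function $t\mapsto\|u(t)-x\|$ satisfies a useful monotonicity estimate. Indeed, writing $g(t)=\frac{1}{2}\|u(t)-x\|^2$ and differentiating along the flow \eqref{E}, using $u'(t)=-(I-T)u(t)+f(t)$ together with $Tx=x$ and the nonexpansivity of $T$, one formally obtains an inequality of the form $\frac{d}{dt}\|u(t)-x\|\le\|f(t)\|\le\epsilon(t)$. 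Integrating and using $\int_0^\infty\epsilon(s)\,ds<\infty$ shows that $\|u(t)-x\|+\int_t^\infty\epsilon(s)\,ds$ is nonincreasing, whence the limit $\ell(x)=\lim_{t\to\infty}\|u(t)-x\|$ exists for every $x\in\fix(T)$. This is the exact continuous analog of the discrete fact used in Theorem \ref{thm:cgce}.

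\emph{Part a).} Assuming $T(C)$ relatively compact, I would first extract a sequence $t_k\to\infty$ along which $Tu(t_k)$ converges strongly to some $\bar x$. Because $\|u(t)-Tu(t)\|\to 0$ by hypothesis, the same sequence satisfies $u(t_k)\to\bar x$, and by continuity of $T$ we get $T\bar x=\bar x$, so $\bar x\in\fix(T)$. Then the monotone limit $\ell(\bar x)=\lim_{t\to\infty}\|u(t)-\bar x\|$ exists and equals $\lim_k\|u(t_k)-\bar x\|=0$; hence $u(t)\to\bar x$ strongly as $t\to\infty$.

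\emph{Part b).} Assuming $X$ uniformly convex and $u(t)$ bounded, I would extract a weakly convergent net (or, working along a sequence $t_k\to\infty$, a weakly convergent subsequence) $u(t_k)\rightharpoonup x$. Since $I-T$ is demiclosed on a uniformly convex space (as cited via \cite{bro67} in the proof of Theorem \ref{thm:cgce}) and $\|u(t_k)-Tu(t_k)\|\to 0$, the weak limit $x$ is a fixed point, so $\fix(T)\neq\emptyset$. Under the additional Opial property, uniqueness of the weak cluster point follows by precisely the contradiction argument in Theorem \ref{thm:cgce}: if $u(t'_k)\rightharpoonup y$ with $y\neq x$ along another sequence, then since both $\ell(x)$ and $\ell(y)$ are well defined one obtains $\ell(x)<\ell(y)$ and $\ell(y)<\ell(x)$ simultaneously, a contradiction. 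Therefore $u(t)\rightharpoonup x$.

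\emph{The main obstacle.} The one genuinely continuous-time point requiring care is the rigorous justification of the differential inequality $\frac{d}{dt}\|u(t)-x\|\le\epsilon(t)$, since the norm is not differentiable in general Banach spaces. I expect to handle this either by passing to the limit in the discrete estimate \eqref{eq:est} (the same semidiscrete scheme $x_k^n\to u(t)$ already invoked in the previous proof furnishes $\|x_k^n-x\|\le\|x_0-x\|+\sum\lambda^n\|e_j^n\|$, whose limit gives the claim directly), or by working with the right Dini derivative of $\|u(t)-x\|$ and the standard estimate $D^+\|u(t)-x\|\le\|u'(t)-\text{(0)}\|$-type bound combined with $\langle u'(t)-(-(I-T)x),\,j\rangle$ through a duality-map argument. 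The semidiscrete route is cleanest, as it reuses machinery already in place and avoids any smoothness assumption on the norm, so that is the approach I would carry out; everything else is a routine transcription of Theorem \ref{thm:cgce}.
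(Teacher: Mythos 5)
Your proposal is correct and follows the paper's overall strategy: first establish that $\ell(x)=\lim_{t\to\infty}\|u(t)-x\|$ exists for every $x\in\fix(T)$, then transcribe the compactness, demiclosedness and Opial arguments of Theorem \ref{thm:cgce} verbatim. The only point of genuine divergence is how the key monotonicity estimate is justified. The paper works directly in continuous time: it sets $\theta(t)=\frac{1}{2}\|u(t)-x\|^2$, picks $u^*(t)\in J(u(t)-x)$ in the duality mapping, uses the subdifferential inequality to get $\theta'(t)\le\langle u^*(t),u'(t)\rangle$, and then the equation \eqref{E}, nonexpansivity, and the identities $\langle u^*(t),u(t)-x\rangle=\|u(t)-x\|^2$ and $\|u^*(t)\|=\|u(t)-x\|$ yield $\theta'(t)\le\sqrt{2\theta(t)+1}\,\epsilon(t)$, i.e.\ that $\sqrt{2\theta(t)+1}+\int_t^\infty\epsilon(s)\,ds$ is decreasing --- this is essentially your second (duality-map) option. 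Your preferred semidiscrete route is equally valid and arguably more economical, since the convergence of the scheme $x^n_k$ is already invoked in the preceding theorem of the paper; but note one refinement it requires: the estimate \eqref{eq:est} started at $x_0$ only gives $\|u(t)-x\|\le\|x_0-x\|+\int_0^t\epsilon(s)\,ds$, which bounds the trajectory but does not by itself produce a limit. You must apply the discretization on each subinterval $[t_1,t_2]$ with initial datum $u(t_1)$ (legitimate by uniqueness of solutions of \eqref{E}) to obtain $\|u(t_2)-x\|\le\|u(t_1)-x\|+\int_{t_1}^{t_2}\epsilon(s)\,ds$, which is the quasi-Fej\'er inequality giving the monotonicity of $\|u(t)-x\|+\int_t^\infty\epsilon(s)\,ds$ and hence the existence of $\ell(x)$. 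With that in place, parts a) and b) go through exactly as you describe and exactly as in the paper.
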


 \begin{proof} We claim that for all $x\!\in\!\fix(T)$ the limit $\ell(x)=\lim_{t\to\infty} \|u(t)-x\|$ exists.
 To prove this let $\theta(t)=\frac{1}{2}\|u(t)-x\|^2$ and $g(t)\!=\!\sqrt{2\theta(t)+1}+\int_{t}^\infty \!\epsilon(s)\,ds$.
 In order to establish the existence of the limit $\ell(x)$ it suffices to show that $g(t)$
is decreasing. Let us prove that $g'(t)\leq 0$, that is to say, $\frac{d}{dt}\sqrt{2\theta(t)+1}\leq \epsilon(t)$. 
We recall that the duality mapping on 
$X$  is  the subdifferential $J(x)=\partial \psi (x)$ of the convex function $\psi(x)=\frac{1}{2}\|\cdot\|^2$. 
Choosing $u^*(t)\in J(u(t)\!-\!x)$, the subdifferential inequality gives
$$\mbox{$\frac{1}{2}\|u(t)-x\|^2 +\langle u^*(t),v-u(t)\rangle\leq \frac{1}{2}\|v-x\|^2$}\qquad(\forall v\in X)$$
so that taking $v=u(t-h)$ with $h>0$ we get
$$\langle u^*(t),u(t-h)-u(t)\rangle\leq \theta(t-h)-\theta(t).$$
Dividing by $h$ and letting $h\downarrow 0$ it follows that $\theta'(t)\leq \langle u^*(t),u'(t)\rangle$. Then, using the equation \eqref{E}
and the fact that $x\in\fix(T)$, the nonexpansivity of $T$ gives
\begin{eqnarray*}
\theta'(t)&\leq& \langle u^*(t),(I-T)x-(I-T)u(t)+f(t)\rangle\\
&=&\langle u^*(t),Tu(t)-Tx\rangle-\langle u^*(t),u(t)-x\rangle+\langle u^*(t),f(t)\rangle\\
&\leq&\|u^*(t)\| \|u(t)-x\| -\langle u^*(t),u(t)-x\rangle+\|u^*(t)\| \|f(t)\|.
\end{eqnarray*}
Now, from well known properties of the duality mapping we have
$\langle u^*(t),u(t)-x\rangle=\|u(t)-x\|^2$ and $\|u^*(t)\|=\|u(t)-x\|$ so that 
$$\theta'(t)\leq \|u^*(t)\| \|f(t)\| =  \|u(t)-x\| \|f(t)\| \leq \sqrt{2\theta(t)+1}\; \epsilon(t)$$
which proves our claim $\frac{d}{dt}\sqrt{2\theta(t)+1}\leq \epsilon(t)$. This implies the
existence of $\ell(x)=\lim_{t\to\infty} \|u(t)-x\|$, from which the rest of the proof follows  the same pattern as  the proof of Theorem \ref{thm:cgce}. 
 \end{proof}
\vspace{2ex}
The estimate \eqref{contrate} can also be used to derive the following continuous time analogs of the
rates of convergence in Theorem \ref{thm:2} and Theorem \ref{thm:dos}.

\begin{theorem}
Let $u(t)$ be the unique solution of \eqref{E}. Assume $(\mbox{\sc h}_2'')$ and let
$\nu=(\kappa+2\sqrt{2}\,S)/\sqrt{\pi}$. Then, for all $t\geq 1$ we have
	\begin{equation}\label{contrate1}\|u'(t)\|\le \dfrac{\nu}{\sqrt{t}}+\int_{t/2}^\infty{4\,\epsilon(s)}\,ds.
	\end{equation}
Moreover, if  $\varphi:[0,\infty)\to[0,\infty)$ is nondecreasing and
$\mu=\int_0^\infty\varphi(s)\epsilon(s)\,ds<\infty$, then  
\begin{equation}\label{bound:3c}
	\|u'(t)\|\le \dfrac{\nu}{\sqrt{t}}+\frac{4\mu}{\varphi(t/2)}.
	\end{equation}
In particular, if
$\int_0^\infty s^a\, \epsilon(s)\,ds<\infty$ for some $a\geq 0$, then
  $\|u'(t)\|= O(1/t^{b})$ with $b=\min\{\frac{1}{2},a\}$.
\end{theorem}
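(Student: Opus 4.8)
The plan is to mimic the proof of Theorem \ref{thm:2}, replacing the discrete bound \eqref{bound_inexact} by its continuous counterpart \eqref{contrate} and turning sums into integrals. Everything starts from the estimate $\|u'(t)\|\le \kappa\,\sigma(t)+\int_0^t 2\epsilon(s)\sigma(t-s)\,ds+\epsilon(t)$ established above, and the whole argument amounts to splitting the convolution integral at the midpoint $s=t/2$ and using the elementary bound $\sigma(y)\le 1/\sqrt{\pi y}$ valid for all $y>0$.

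For the first inequality \eqref{contrate1} I would estimate the three terms of \eqref{contrate} separately for $t\ge 1$. The first term satisfies $\kappa\,\sigma(t)\le \kappa/\sqrt{\pi t}$. For the integral I split $[0,t]=[0,t/2]\cup[t/2,t]$: on $[0,t/2]$ one has $t-s\ge t/2$, hence $\sigma(t-s)\le 1/\sqrt{\pi(t-s)}\le \sqrt{2}/\sqrt{\pi t}$, which gives $\int_0^{t/2}2\epsilon(s)\sigma(t-s)\,ds\le (2\sqrt2/\sqrt{\pi t})\int_0^\infty\epsilon(s)\,ds=2\sqrt2\,S/\sqrt{\pi t}$; collecting these two contributions yields exactly $\nu/\sqrt t$ with $\nu=(\kappa+2\sqrt2\,S)/\sqrt{\pi}$. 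On $[t/2,t]$ I simply bound $\sigma(t-s)\le 1$, obtaining $\int_{t/2}^t2\epsilon(s)\,ds\le \int_{t/2}^\infty 2\epsilon(s)\,ds$.

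It then remains to absorb the trailing term $\epsilon(t)$ into the tail integral, and this is the only place where hypothesis $(\mbox{\sc h}_2'')$ and the restriction $t\ge 1$ are genuinely used. Since $\epsilon$ is nonincreasing, $\int_{t/2}^t\epsilon(s)\,ds\ge (t/2)\epsilon(t)$, so that $2\int_{t/2}^\infty\epsilon(s)\,ds\ge t\,\epsilon(t)\ge \epsilon(t)$ whenever $t\ge 1$; adding this to the $[t/2,t]$ contribution produces the single tail $\int_{t/2}^\infty 4\epsilon(s)\,ds$ appearing in \eqref{contrate1}. This monotonicity trick for the last term is the main (and essentially the only) subtle point, as it is what forces the threshold $t\ge 1$; the rest is routine bookkeeping parallel to the discrete argument.

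The bound \eqref{bound:3c} then follows from \eqref{contrate1} by the continuous analog of \eqref{ser}: if $\varphi$ is nondecreasing then $\varphi(s)\ge\varphi(t/2)$ for $s\ge t/2$, whence $\varphi(t/2)\int_{t/2}^\infty\epsilon(s)\,ds\le\int_{t/2}^\infty\varphi(s)\epsilon(s)\,ds\le\mu$ and therefore $\int_{t/2}^\infty 4\epsilon(s)\,ds\le 4\mu/\varphi(t/2)$. Finally, for the last assertion I would specialize to $\varphi(s)=s^a$, which is admissible since $\int_0^\infty s^a\epsilon(s)\,ds=\mu<\infty$; then \eqref{bound:3c} reads $\|u'(t)\|\le \nu/\sqrt t+ 4\cdot 2^a\mu/t^a$, and since the two terms are of orders $t^{-1/2}$ and $t^{-a}$ respectively we conclude $\|u'(t)\|=O(t^{-b})$ with $b=\min\{\tfrac12,a\}$ (the case $a=0$ reducing to the trivial boundedness statement).
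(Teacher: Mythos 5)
Your proposal is correct and follows essentially the same route as the paper: the same split of the convolution integral at $s=t/2$ with $\sigma(t-s)\le\sqrt{2/(\pi t)}$ on $[0,t/2]$ and $\sigma(t-s)\le 1$ on $[t/2,t]$, the same use of monotonicity of $\epsilon$ to absorb the trailing $\epsilon(t)$ into the tail integral for $t\ge 1$, and the same inequality $\varphi(t/2)\int_{t/2}^\infty\epsilon\le\mu$ followed by the choice $\varphi(s)=s^a$. No gaps.
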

\begin{proof} Let us fix $t\geq 1$ and consider the bound \eqref{contrate}. Splitting
the integral $\int_0^t{2\epsilon(s)}{\sigma(t\!-\!s)}\,ds$ into $[0,\frac{t}{2}]$ and $[\frac{t}{2},t]$,
and noting that $\sigma(t-s)\leq \sqrt{{2/\pi t}}$ on the first interval and $\sigma(t-s)\leq 1$ on the second, we get
\begin{equation}\label{trtr}
\|u'(t)\|\le\frac{\nu}{\sqrt{\pi t}}+\int_{t/2}^t{2\,\epsilon(s)}\,ds+\epsilon(t).
\end{equation}
Since $\epsilon(\cdot)$ is nonincreasing and $t\geq 1$, we have  
$\epsilon(t)\leq\int_{t/2}^t2\,\epsilon(s)\,ds$
which plugged into \eqref{trtr} yields \eqref{contrate1}.	
Now, since $\varphi(\cdot)$ is nondecreasing, \eqref{bound:3c} follows directly from \eqref{contrate1} using the inequality 
$$\varphi(\mbox{$\frac{t}{2}$})\int_{t/2}^\infty\epsilon(s)\,ds \leq \int_{t/2}^\infty\varphi(s)\epsilon(s)\,ds\leq \mu,$$
while the last claim $\|u'(t)\|= O(1/t^{b})$ follows from \eqref{bound:3c}  by taking $\varphi(s)=s^a$.
\end{proof}
  
\begin{theorem}
Let $u(t)$ be the solution of \eqref{E} and assume $(\mbox{\sc h}_2')$ or $(\mbox{\sc h}_2'')$, and
$\epsilon(t)= O(1/t^a)$ with $a\geq \frac{1}{2}$.\\[0.5ex]
{\em a)} If $\frac{1}{2}\leq a<1$ then $\|u'(t)\|= O(1/t^{a-1/2})$.\\[0.5ex]
{\em b)} If $a=1$ then $\|u'(t)\|= O(\log t/\sqrt{t})$.\\[0.5ex]
{\em c)} If $a>1$ then $\|u'(t)\|= O(1/\sqrt{t})$.
\end{theorem}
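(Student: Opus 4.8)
The plan is to feed the continuous-time estimate \eqref{contrate} — which is the exact counterpart of the discrete bound \eqref{bound_inexact} — and to bound its three summands separately, mirroring the proof of Theorem \ref{thm:dos}. First I would dispatch the two easy terms. The leading term satisfies $\kappa\,\sigma(t)=O(1/\sqrt t)$ because $\sigma(t)=1/\sqrt{\pi t}$ for $t\geq 1/\pi$, and the trailing term $\epsilon(t)=O(1/t^a)$ is $O(1/\sqrt t)$ as soon as $a\geq\frac12$. Since every target rate in (a)--(c) is no faster than $1/\sqrt t$, neither of these terms can dominate, so the whole problem collapses to controlling the convolution $\int_0^t 2\,\epsilon(s)\,\sigma(t-s)\,ds$.

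For this convolution I would reuse the kernel $I_a(t)=\int_0^t (s+1)^{-a}(t-s)^{-1/2}\,ds$ already introduced in the proof of Theorem \ref{thm:dos}. Because $\epsilon$ is continuous and $\epsilon(t)=O(1/t^a)$, one can fix a constant $K\geq 0$ with $\epsilon(s)\leq K/(s+1)^a$ for all $s\geq 0$ (continuity, hence local boundedness, handles the regime near $s=0$). Combined with the elementary inequality $\sigma(y)\leq 1/\sqrt{\pi y}$, this yields
$$\int_0^t 2\,\epsilon(s)\,\sigma(t-s)\,ds \leq \frac{2K}{\sqrt\pi}\,I_a(t),$$
and the integral converges near $s=t$ since $\int_0^t (t-s)^{-1/2}\,ds=2\sqrt t$. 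Note that, unlike the discrete setting, no analog of Lemma \ref{lemma111} is required here: the continuous formulation is already an integral, so the factor $\eta$ never enters and the reduction is in fact cleaner.

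It then suffices to quote the asymptotics of $I_a(t)$ derived inside the proof of Theorem \ref{thm:dos} via the Gauss hypergeometric representation $I_a(t)=\frac{2\sqrt t}{(t+1)^a}\,{}_2F_1(a,\tfrac12;\tfrac32;\tfrac{t}{t+1})$. For $a=1$ one has $I_1(t)=O(\log t/\sqrt t)$, which gives (b); for $a\neq 1$ the connection formula for ${}_2F_1$ yields $I_a(t)\sim\frac{1}{a-1}\,t^{-1/2}+\frac{\sqrt\pi\,\Gamma(1-a)}{\Gamma(3/2-a)}\,t^{-(a-1/2)}$, whose dominant term is $t^{-(a-1/2)}$ when $\frac12\leq a<1$ and $t^{-1/2}$ when $a>1$, giving (a) and (c) respectively. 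The only caveat, identical to the discrete case, is that $\Gamma(1-a)$ has a pole at integer $a\geq 2$; there one simply observes that $\epsilon(t)=O(1/t^a)$ is stronger than the same hypothesis for any $a'\in(1,2)$, so the conclusion $O(1/\sqrt t)$ persists.

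I do not anticipate a genuine obstacle, since the argument is essentially a transcription of the discrete proof and the one substantive analytic input — the behavior of $I_a(t)$ — is already in hand. The only new bookkeeping is the passage from the limiting hypothesis $\epsilon(t)=O(1/t^a)$ to the uniform envelope $\epsilon(s)\leq K/(s+1)^a$, which is routine, so the effort lies in assembling the three estimates rather than in any single difficult step.
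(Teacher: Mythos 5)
Your proposal is correct and follows essentially the same route as the paper, which proves this theorem in one line by invoking \eqref{contrate} together with the asymptotics of $I_a(t)=\int_0^t (s+1)^{-a}(t-s)^{-1/2}\,ds$ already worked out in the proof of Theorem \ref{thm:dos}. Your elaboration—dispatching the terms $\kappa\,\sigma(t)$ and $\epsilon(t)$ as $O(1/\sqrt{t})$, passing from $\epsilon(t)=O(1/t^a)$ to a uniform envelope $K/(s+1)^a$, noting that no analog of Lemma \ref{lemma111} is needed since the convolution is already an integral, and handling the pole of $\Gamma(1-a)$ at integer $a\geq 2$ by monotonicity of the hypothesis—is exactly the reasoning the paper leaves implicit.
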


\begin{proof} This follows from \eqref{contrate} and from the analysis of the asymptotics of the 
integral $I_a(t)=\int_0^t\!\frac{1}{(s+1)^a\sqrt{t-s}}\,ds$ established in the proof of Theorem \ref{thm:dos}. 
\end{proof} 

\appendix

\section{Bound for approximate projections}\label{Ap2}
The goal of this Appendix is to establish the next technical Lemma used in the proof of Theorem \ref{thm:main2}.
\begin{lemma}\label{tech} Let $(x_n,z_n)$ be given by \eqref{KMZ}, and denote $\delta_n=d(x_n,C)+ \gamma_n$ 
and $\xi_n=\|e_n\|+\gamma_n/\alpha_n$ with $\delta_0=\xi_0=0$.
If $\sum_{k\geq 1}\alpha_k=\infty$ and $\sum_{k\geq 1}\alpha_k\xi_k<\infty$, 
then $\delta_n\to 0$ and $\sum_{k\geq 1}\alpha_{k}\delta_{k-1}<\infty$. 
\end{lemma}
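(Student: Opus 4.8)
The plan is to reduce everything to a single one-step recursive inequality for $\delta_n$ and then apply a standard analysis of such recursions. First I would bound the distance $d(x_{n+1},C)$ by exploiting the convexity of $C$: since $z_n\in C$ and $Tz_n\in C$, the convex combination $(1-\alpha_{n+1})z_n+\alpha_{n+1}Tz_n$ also lies in $C$, and therefore
\[
d(x_{n+1},C)\le\bigl\|x_{n+1}-\bigl[(1-\alpha_{n+1})z_n+\alpha_{n+1}Tz_n\bigr]\bigr\|=\bigl\|(1-\alpha_{n+1})(x_n-z_n)+\alpha_{n+1}e_{n+1}\bigr\|.
\]
Using $\|x_n-z_n\|\le d(x_n,C)+\gamma_n=\delta_n$ and adding $\gamma_{n+1}$, this yields the key recursion
\[
\delta_{n+1}\le(1-\alpha_{n+1})\delta_n+\alpha_{n+1}\xi_{n+1},
\]
where I recall that $\alpha_{n+1}\xi_{n+1}=\alpha_{n+1}\|e_{n+1}\|+\gamma_{n+1}$.

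From this recursion the summability claim is immediate. Rewriting it as $\alpha_{n+1}\delta_n\le\delta_n-\delta_{n+1}+\alpha_{n+1}\xi_{n+1}$ and summing over $n=0,\dots,N-1$, the $\delta_n$ terms telescope; using $\delta_0=0$ and $\delta_N\ge0$ I obtain
\[
\sum_{k=1}^N\alpha_k\delta_{k-1}\le\sum_{k=1}^N\alpha_k\xi_k\le\sum_{k\ge1}\alpha_k\xi_k<\infty,
\]
which proves $\sum_{k\ge1}\alpha_k\delta_{k-1}<\infty$ and, as a by-product, shows that the sequence is bounded by $M:=\sum_{k\ge1}\alpha_k\xi_k$, since the recursion also gives $\delta_n\le\delta_0+\sum_{k=1}^n\alpha_k\xi_k$.

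To prove $\delta_n\to0$ I would unroll the recursion from an arbitrary cutoff $m$, obtaining
\[
\delta_n\le\Bigl(\textstyle\prod_{k=m+1}^n(1-\alpha_k)\Bigr)\delta_m+\sum_{k=m+1}^n\alpha_k\xi_k\le\Bigl(\textstyle\prod_{k=m+1}^n(1-\alpha_k)\Bigr)M+R_m,
\]
where $R_m:=\sum_{k>m}\alpha_k\xi_k\to0$ as $m\to\infty$. Because $\sum_{k\ge1}\alpha_k=\infty$, the product $\prod_{k=m+1}^n(1-\alpha_k)\le\exp(-\sum_{k=m+1}^n\alpha_k)$ tends to $0$ as $n\to\infty$ for each fixed $m$; hence $\limsup_{n\to\infty}\delta_n\le R_m$, and letting $m\to\infty$ gives $\delta_n\to0$.

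The only genuinely delicate point is the geometric estimate in the first step: identifying the right point of $C$ to compare $x_{n+1}$ with, and accounting for the approximate-projection slack $\gamma_n$ through $\|x_n-z_n\|\le\delta_n$. Once the recursion $\delta_{n+1}\le(1-\alpha_{n+1})\delta_n+\alpha_{n+1}\xi_{n+1}$ is in place, the remaining steps are routine manipulations of recursive inequalities; in particular, note that the conclusion $\delta_n\to0$ does not require $\xi_n\to0$, only the boundedness of $\delta_n$ together with $\sum_{k\ge1}\alpha_k=\infty$ and $\sum_{k\ge1}\alpha_k\xi_k<\infty$.
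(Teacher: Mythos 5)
Your proof is correct, and the crucial first step --- comparing $x_{n+1}$ to the point $(1-\alpha_{n+1})z_n+\alpha_{n+1}Tz_n\in C$ to obtain the recursion $\delta_{n+1}\le(1-\alpha_{n+1})\delta_n+\alpha_{n+1}\xi_{n+1}$ --- is exactly the one used in the paper. Where you diverge is in how you extract the two conclusions from that recursion, and your route is the more elementary one. For the summability of $\sum_k\alpha_k\delta_{k-1}$, the paper first unrolls the recursion into the explicit bound $\delta_n\le\sum_{i=0}^n\alpha_i\xi_i\,\rho_n/\rho_i$ with $\rho_n=\prod_{j\le n}(1-\alpha_j)$, then interchanges a double sum and identifies the inner sum $\sum_{k>i}\alpha_k\prod_{j=i+1}^{k-1}(1-\alpha_j)$ as a total probability equal to $1$; your telescoping of $\alpha_{n+1}\delta_n\le\delta_n-\delta_{n+1}+\alpha_{n+1}\xi_{n+1}$ reaches the same bound $\sum_{k\ge1}\alpha_k\delta_{k-1}\le\sum_{k\ge1}\alpha_k\xi_k$ in two lines and with no combinatorial interpretation needed. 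For $\delta_n\to0$, the paper applies dominated convergence on $\NN$ equipped with the finite measure $\mu(\{i\})=\alpha_i\xi_i$, whereas you split at a cutoff $m$ and use $\prod_{k=m+1}^n(1-\alpha_k)\le\exp(-\sum_{k=m+1}^n\alpha_k)\to0$ together with the vanishing tail $R_m$; the two arguments are equivalent in substance (the paper's $f_n(i)=\rho_n/\rho_i\to0$ is precisely your product estimate), but yours avoids invoking measure theory for what is a purely elementary fact. In short: same key lemma, cleaner bookkeeping on your side, identical conclusions.
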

\begin{proof} 
Starting from the identity  
$$x_{n}=(1-\alpha_{n})z_{n-1}+\alpha_{n}Tz_{n-1}+(1-\alpha_{n})(x_{n-1}-z_{n-1})+\alpha_{n}e_{n}$$
and since $(1-\alpha_{n})z_{n-1}+\alpha_{n}Tz_{n-1}\in C$, we get
$$\mbox{dist}(x_{n},C)\leq \|(1-\alpha_{n})(x_{n-1}-z_{n-1})+\alpha_{n}e_{n}\|\le (1-\alpha_{n})\delta_{n-1}+\alpha_{n}\|e_{n}\|.$$
It follows that $\delta_{n}\leq (1-\alpha_{n})\delta_{n-1}+\alpha_{n}\xi_n$
so that letting $\rho_n=\prod_{j=1}^n(1\!-\!\alpha_j)$ with $\rho_0=1$ we get
$$\frac{\delta_{n}}{\rho_n}\leq \frac{\delta_{n-1}}{\rho_{n-1}}+\frac{\alpha_{n}}{\rho_n}\xi_n.$$
Iterating this inequality we get
$\frac{\delta_{n}}{\rho_n}\leq \sum_{i=0}^n\frac{\alpha_{i}}{\rho_i}\xi_i$  which yields
$\delta_{n}\le \sum_{i=0}^n\alpha_i\xi_i\,\frac{\rho_n}{\rho_i}.$

This inequality can be written as $\delta_{n}\le \int_{\NN}f_n\,d\mu$ with
$\mu$ the finite measure on $\NN$ defined by $\mu(\{i\})=\alpha_i\xi_i$, and 
$f_n:\NN\to\RR$ given by 
$f_n(i)=\frac{\rho_n}{\rho_i}$ for $i\le n$ and $f_n(i)=0$ for $i>n$.
Since $f_n(i)\to 0$ as $n\to\infty$ and $f_n(i)\leq 1$, Lebesgue's dominated convergence
theorem implies that $\int_{\NN}f_nd\mu$ tends to zero so that $\delta_n\to 0$.

It remains to show that the sum $S=\sum_{k\geq 1}\alpha_{k}\delta_{k-1}$ is finite. 
Using the previous bound for $\delta_{k-1}$ and exchanging the order of  summation
we get
$$S\leq \sum_{k=1}^\infty\alpha_{k}\sum_{i=0}^{k-1}\alpha_i\xi_i\,\frac{\rho_{k-1}}{\rho_i}
=\sum_{i=0}^\infty\alpha_i\xi_i\sum_{k=i+1}^\infty \mbox{$\alpha_{k}\prod_{j=i+1}^{k-1}(1\!-\!\alpha_j)$}.
$$
The term $q_{i+1}^k=\alpha_{k}\prod_{j=i+1}^{k-1}(1\!-\!\alpha_j)$ in this last sum can be interpreted as a probability.
Namely, suppose that at every integer $j$ we toss a coin that falls head with probability $\alpha_j$.
Then, $q_{i+1}^k$ is the probability that starting at position $i+1$ the first head occurs exactly at position $k$.
Hence $\sum_{k=i+1}^\infty q_{i+1}^k=1$ and therefore
$S\leq\sum_{i=0}^\infty\alpha_i\xi_i<\infty$.
\end{proof}

\bibliographystyle{ims}
\bibliography{NE2}


\end{document}